\newtheorem{theorem}{Theorem}[section]
\newtheorem{definition}[theorem]{Definition}
\newtheorem{proposition}[theorem]{Proposition}
\newtheorem{lemma}[theorem]{Lemma}
\theoremstyle{definition}
\newtheorem{remark}[theorem]{Remark}
\def\ve{\varepsilon}
\title{\bf Periodic solutions and regularization \\of a Kepler problem with time-dependent perturbation}
\author{Alberto Boscaggin, Rafael Ortega and Lei Zhao}
\date{}
\begin{document}
\maketitle

\begin{abstract}
We consider a Kepler problem in dimension two or three, with a time-dependent $T$-periodic perturbation.  
We prove that for any prescribed positive integer $N$, there exist at least $N$ periodic solutions (with period $T$) as long as the perturbation is small enough. Here the solutions are understood in a general sense as they can have collisions. The concept of generalized solutions is defined intrinsically and it coincides with the notion obtained in Celestial Mechanics via the theory of regularization of collisions.
\end{abstract}

\noindent
{\footnotesize \textbf{AMS-Subject Classification}}. {\footnotesize 37J45, 70F05, 70F16.}\\
{\footnotesize \textbf{Keywords}}. {\footnotesize Kepler problem, forced problem, generalized solutions, regularization, periodic manifolds, bifurcation.}


\section{Introduction}

The theory of perturbations of the Kepler problem is a classical chapter in Celestial Mechanics. See for instance \cite{Mou} and \cite{Cor}. In this paper we consider non-autonomous perturbations which are periodic in time and preserve the Newtonian structure. More precisely, we shall consider the system 
\begin{equation}\label{eqmain0}
\ddot u = -\frac{u}{\vert u \vert^3} + \varepsilon \,\nabla_u U(t,u,\varepsilon), \qquad u \in \mathbb{R}^N,
\end{equation}
where $N = 2$ or $3$, $\varepsilon$ is a small real parameter and the force function $U$ is smooth and $T$-periodic in the variable $t$. We will prove that, for any prescribed integer $l \geq 1$ and $\varepsilon$ small enough, there exist at least $l$ periodic solutions with period $T$. These solutions will be understood in a generalized sense because collisions cannot be excluded: a solution $u = u(t)$ will be continuous everywhere and will satisfy the equation \eqref{eqmain0} whenever $u(t) \neq 0$. In addition the energy $\tfrac12 \vert \dot u(t) \vert^2 - \tfrac{1}{\vert u(t) \vert}$ and the direction $\tfrac{u(t)}{\vert u(t) \vert}$ will have a finite limit at collisions; that is, as $t \to t_0$ with $u(t_0) = 0$. 
Experts in Celestial Mechanics will probably prefer to say that a function $u(t)$ is a solution of \eqref{eqmain0} if it can be transformed into a solution of some regularized problem. Later we will see that, at least in two dimensions, 
both notions are indeed equivalent. We prefer the above definition because it is intrinsic and does not require an a priori knowledge of the techniques of regularization of collisions.

Problem \eqref{eqmain0} looks like a perturbation of a completely integrable Hamiltonian system.
When the unperturbed problem possesses a $N$-dimensional torus of $T$-periodic solutions, a well established theory (see for instance \cite{ACE87}) ensures that, under a suitable non-degeneracy condition, at least $N+1 = \textnormal{Cat}(\mathbb{T}^N)$ solutions with period $T$ survive when $\varepsilon$ is small enough. Due to its peculiar degeneracies, however, this does not apply to the Kepler problem. The ultimate reason for this is Kepler's third law: the period of a Keplerian ellipse depends only on its major semi-axis, thus giving rise to a larger set of $T$-periodic solutions for $\varepsilon = 0$. Even worst, to obtain a compact manifold of solutions of the unperturbed problem, one is forced to take into account also rectilinear motions and to allow collisions. 
To overcome these difficulties, a first possibility is to abandon this global point of view. In this case, the averaging method becomes one of the most classical technique to prove the existence of periodic solutions of \eqref{eqmain0}. After changing the system to appropriate coordinates (say that of Delaunay or Poincar\'e), the critical points of an averaged force function $U_{\#}$ obtained from $U$ produce branches of periodic solutions for small $\varepsilon$. This is a perturbative method and solutions are without collisions if we start from a circular or an elliptic motion at $\varepsilon = 0$. The disadvantage of this method is that it imposes additional conditions on the function $U$ which are usually found after long computations. 
On the other hand, one can try to recover a more global approach by imposing some symmetry condition on $U$.
For instance, variational techniques have been employed by Ambrosetti and Coti Zelati \cite{AC89,AC93} and by Serra and Terracini \cite{ST}; in both these works the assumptions on $U$ are used in order to guarantee that the critical points of the action functional do not have collisions. See also \cite{FonGal} for additional references. Our result is of different nature because we are concerned with rather general perturbations and collisions are allowed.

The basic tools in this paper will be regularization theory and bifurcation from periodic manifolds. As it is traditional we transform the periodic system \eqref{eqmain0} into an autonomous Hamiltonian system of the type
\begin{equation}\label{primo}
\dot u = \partial_v \mathcal{H}_\varepsilon, \qquad \dot v = -\partial_u \mathcal{H}_\varepsilon, \qquad \dot t = \partial_\tau \mathcal{H}_\varepsilon, \qquad \dot \tau = -\partial_t \mathcal{H}_\varepsilon
\end{equation}
with $\mathcal{H}_\varepsilon = \mathcal{H}_\varepsilon(u,v,t,\tau)$. This system has $N+1$ degrees of freedom and the coordinates in the phase space are $u \in \mathbb{R}^N \setminus \{0\}$, $v = \dot u \in \mathbb{R}^N$, $t \in \mathbb{R}/T\mathbb{Z}$ and $\tau \in \mathbb{R}$, where $\tau$ is the conjugate variable of the time $t$. The dynamics of \eqref{eqmain0} outside the singularity is reproduced by \eqref{primo} at the energy level $\mathcal{H}_\varepsilon^{-1}(0)$. After applying Levi-Civita and Kustaanheimo-Stiefel regularization ($N=2$ or $3$) we obtain a symplectic manifold $(M,\omega)$ of dimension $2N+2$ and a Hamiltonian system
\begin{equation}\label{secondo}
\dot x = X_{\mathcal{K}_\varepsilon}(x), \qquad x \in \mathcal{K}_\varepsilon^{-1}(0) \subset M,
\end{equation}
which can be understood as the regularization of the non-autonomous system \eqref{eqmain0}. The Hamiltonian functions $\mathcal{H}_\varepsilon$ and $\mathcal{K}_\varepsilon$ are related but they are not equivalent. From our point of view the key fact on regularization is that the closed orbits of \eqref{secondo} produce generalized periodic solutions of \eqref{eqmain0}. In principle these solutions can be sub-harmonic with period $\eta T$ for some $\eta = 1,2,\ldots$ but the topology of $M$ allows to define a winding number for the closed orbits of \eqref{secondo} that coincides with $\eta$. We must look for closed orbits of \eqref{secondo} with winding number $\eta = 1$.

The next step is the analysis of the unperturbed problem ($\varepsilon = 0$). A direct computation allows to construct a sequence of compact and connected manifolds
$$
P_n \subset \mathcal{K}_0^{-1}(0) \subset M, \qquad n=1,2,\ldots
$$
which are filled by closed orbits of \eqref{secondo} with $\eta = 1$ and $\varepsilon = 0$. These manifolds have dimension $2N$, in essence they correspond to the periodic solutions of the Kepler problem with minimal period $\tfrac{T}{n}$. Note that circular, elliptic and rectilinear motions are included.

The proof will be complete if we show that each manifold $P_n$ admits bifurcation for positive $\varepsilon$. Then every bifurcating branch will produce a periodic solution of the original system \eqref{eqmain0} if $\varepsilon > 0$ is small enough.

To prove the existence of these bifurcations we will apply a result of Weinstein in \cite{Wei73}. This is a delicate step of the proof because most of the results in the literature on periodic manifolds do not apply to our problem (see Remark \ref{non-deg} for more details). Fortunately the non-degeneracy condition in \cite{Wei73} is very weak and the manifold $P_n$ satisfies it.   

The general strategy described above works well for dimensions $N=2$ and $N=3$ but in the three-dimensional case some additional subtleties appear, mainly related to the use of Kustaanheimo-Stiefel regularization. Compared to Moser regularization \cite{Mos70}, Kustaanheimo-Stiefel regularization transforms the unperturbed system into an integrable system defined on a Euclidean space, which is thus easier for our current purpose of calculation. However it carries additional symmetry and thus in this case $M$ is obtained as a quotient manifold via symplectic reduction. For a discussion about the relationship between Kustaanheimo-Stiefel and Moser regularization, see \cite{Kum}.

The rest of the paper is organized in five sections. First we present a short introduction to Weinstein's theory in Section \ref{sec2}. The main result of the paper has been already discussed although in very unprecise terms. The formal statement can be found in Section \ref{sec3}. The proof of this theorem for $N=2$ is found in Section \ref{sec4}. This section also contains a discussion on the equivalence of the possible definitions of generalized solution. Section \ref{sec5} contains the proof of the theorem for $N=3$. The last section of the paper is concerned with other related results on this problem: bifurcation from infinity, removal of collisions by small changes in the function $U$and a model considered by Fatou in \cite{F1928}.

\section{Bifurcation from a periodic manifold}\label{sec2}

In \cite{Wei73} Weinstein considered a Hamiltonian system having a continuum of periodic orbits and explained how to obtain bifurcations from this continuum. This technique will be essential in our proof and we are going to present a short description. The terminology is taken from \cite{Wei73} with a view towards our precise needs.

Let $(M,\omega)$ be a symplectic manifold and let $H \in \mathcal{C}^\infty(M)$ be a function on $M$. The corresponding Hamiltonian vector field will be denoted by $X_H$. The solution of the system
\begin{equation}\label{ham}
\dot x = X_H(x)
\end{equation}
satisfying $x(0) = \xi$ is denoted by $\phi_t(\xi)$.

Assume that the number $E \in \mathbb{R}$ is a regular value of $H$ so that $H^{-1}(E)$ is a submanifold of $M$. Given a non-constant periodic solution $x(t)$ of \eqref{ham} with $H(x(t)) = E$, we fix a period $\tau > 0$ and consider the linear map
$$
P: T_p M \to T_p M, \qquad P(v) = \partial_\xi \phi_\tau(p)v, 
$$
where $p = x(0)$. Sometimes $P$ is called the monodromy operator at $p$. It is well known (see for instance \cite{MosZeh}) that the vector $w = X_H(p)$ and the hyperplane $W = T_p(H^{-1}(E))$ are invariant under $P$. Moreover, $w \in W$. 

The invariance of $W$ allows to restrict the monodromy operator to this sub-space. This restriction will be denoted by $P_W: W \to W$. Let $L_w$ be the one-dimensional subspace of $W$ spanned by $w$,
$$
L_w = \{ \lambda w \, : \, \lambda \in \mathbb{R}\}.
$$
We will be interested in the dimension of the space
$$
\mathcal{E} = \left( \textnormal{Id} - P_W \right)^{-1}(L_w).
$$
This is a subspace of $W$ containing the eigenspace $\textnormal{Ker}\left( \textnormal{Id} - P_W \right)$. In particular $w \in \mathcal{E}$ and so $\mathcal{E}$ has dimension at least one. The integer $\textnormal{dim}\,\mathcal{E}-1$ can be interpreted as a degeneracy index for the periodic solution. To explain this we translate the previous discussion to the language of matrices. Let us select a basis $\{v_1,v_2,\ldots,v_{2N}\}$ of $T_p(M)$ such that $v_1 \notin W$, $v_2 = w$ and $v_3,\ldots,v_{2N} \in W$. The matrix associated to $P$ is of the type
$$
\mathcal{M}_p = \left( \begin{array}{cc}
1 & 0  \\
\alpha & \mathcal{M}_W
\end{array}\right), \qquad \mbox{ with } \quad 
\mathcal{M}_W = \left( \begin{array}{cc}
1 & \beta  \\
0 & \Gamma 
\end{array}\right).
$$
The sub-matrix $\Gamma$ has dimension $(2N-2) \times (2N-2)$ and it contains all the information concerning $\textnormal{dim}\,\mathcal{E}$. Actually,
$$
\textnormal{dim}\,\mathcal{E} = 1 + \textnormal{dim}\, \textnormal{ker} \left( \textnormal{Id} - \Gamma \right)
$$
and the isomorphism theorem implies that
$$
2N - 2 = \textnormal{dim}\, \textnormal{ker} \left( \textnormal{Id} - \Gamma \right) +  \textnormal{rank} \left( \textnormal{Id} - \Gamma \right)
$$
so that
\begin{equation}\label{isoth}
\textnormal{dim}\,\mathcal{E} - 1 = 2N - 2 - \textnormal{rank} \left( \textnormal{Id} - \Gamma \right).
\end{equation}
We observe that the number $\textnormal{dim}\,\mathcal{E}$ only depends on the periodic solution $x(t)$ and the chosen period $\tau > 0$. Moreover this number is invariant under symplectic diffeomorphisms.

Given $H$ and $E$ in the previous conditions we define the set of periodic points with energy $E$, denoted by $\textnormal{Per}_H^E$, as the set of couples $(p,\tau) \in M \times \,]0,\infty[$ satisfying
$$
H(p) = E, \qquad X_H(p) \neq 0, \qquad \phi_\tau(p) = p.
$$

Note that $\phi_t(p)$ is a non-constant periodic solution with period $\tau$. The same closed orbit will produce other points lying in 
$\textnormal{Per}_H^E$ since all pairs $(p,N\tau)$ lie also in $\textnormal{Per}_H^E$ if $N \geq 1$ is an integer.

A subset $\Sigma \subset \textnormal{Per}_H^E$ is a \emph{periodic manifold} if it satisfies the conditions
\begin{itemize}
\item[(i)] $\Sigma$ is a closed submanifold of $M \times \mathbb{R}$,
\item[(ii)] The restriction to $\Sigma$ of the projection $\pi: M \times \mathbb{R} \to M$ is an embedding.
\end{itemize}

We will say that the periodic manifold is \emph{non-degenerate} if it also satisfies the condition below,
\begin{itemize}
\item[(iii)] The tangent space of $\pi(\Sigma)$ coincides with $\mathcal{E}$ for each $(p,\tau) \in \Sigma$.
\end{itemize}
The last condition was stated in a different but equivalent way in \cite{Wei73}. In fact Lemma 1.1 proves the equivalence of (NDMP3) in \cite{Wei73} and our condition (iii).

We are ready to state a corollary of Theorem 1.4 in \cite{Wei73}. 

Given a symplectic manifold $(M,\omega)$ such that the form $\omega$ is exact on $M$, a function $H \in \mathcal{C}^\infty(M)$ and a regular value $E \in \mathbb{R}$, we assume that $\Sigma$ is a compact non-degenerate periodic manifold. In addition $\mathcal{U}$ is a neighborhood of $\Sigma$ in $M \times \mathbb{R}$ and $\mathcal{H} = \mathcal{H}(x,\ve)$ is a function in $\mathcal{C}^\infty(M \times [0,1])$ with $\mathcal{H}(\cdot,0) = H$. Then there exists $\ve_0 > 0$ such that for each $\ve \in \,]0,\ve_0[$ the system
$$
\dot x = X_{\mathcal{H}(\cdot,\ve)}(x), \qquad \mathcal{H}(x,\ve) = E
$$
has at least $m$ closed orbits lying in $\mathcal{U}$, where $m$ is the least integer greater or equal than 
$\textnormal{Cat}(\Sigma)/2$ (here $\textnormal{Cat}(\Sigma)$ denotes the Lusternik-Schnirelman category of $\Sigma$).

In practice, to check the condition of non-degeneracy (iii) it is convenient to employ the degeneracy index defined above. To explain this we need some preliminary remarks.

Given a periodic manifold, the tangent space is always contained in $\mathcal{E}$, that is
\begin{equation}\label{ninc}
T_p(\pi(\Sigma)) \subset \mathcal{E}, \quad \mbox{ for each } (p,\tau) \in \Sigma.
\end{equation}
To prove this we assume that $\delta$ is a vector in $T_{p}(\pi(\Sigma))$ and we consider a smooth path
$p_s$ in $\pi(\Sigma)$, $s \in \,]-\ve,\ve[\,$, passing through $p$ at $s = 0$ and such that
$$
\frac{d}{ds} p_s \vert_{s = 0} = \delta.
$$
This path can be lifted to a smooth path $(p_s,\tau_s)$ is $\Sigma$ with $\tau_0 = \tau$;
moreover, since the path lies in $\textnormal{Per}_H^E$,
$$
p_s = \phi_{\tau_s}(p_s).
$$
Differentiating with respect to $s$ and letting $s = 0$ we obtain
$$
\delta = \partial_\xi \phi_{\tau}(p)\delta + \sigma X_H(p),
$$
where $\sigma = \frac{d}{ds} \tau_s \vert_{s = 0}$. In the above notations,
$$
\left( \textnormal{Id} - P \right) \delta = \sigma w.
$$
The curve $s \mapsto p_s$ lies in $H^{-1}(E)$ and so $\delta \in T_p(H^{-1}(E)) = W$. Summing up the previous discussion, 
$\delta \in \left( \textnormal{Id} - P_W \right)^{-1}(L_w) = \mathcal{E}$.

Once we know that \eqref{ninc} holds we observe that (iii) is equivalent to $\textnormal{dim}(\Sigma) = \textnormal{dim}(\mathcal{E})$. Let us now assume that $\Sigma$ is a periodic manifold of dimension $2N-2$. From the identity \eqref{isoth} we deduce that $\Sigma$ is non-degenerate if and only if $\Gamma$ is not the identity matrix.

\section{Main result}\label{sec3}

We consider the perturbed Kepler problem \eqref{eqmain0}
where $N=2$ or $N = 3$ and the force function $U: \mathbb{R} \times \mathbb{R}^N \times [0,1] \to \mathbb{R}$ is
$\mathcal{C}^\infty$ and satisfies the periodicity condition
$$
U(t+T,u,\varepsilon) = U(t,u,\varepsilon)
$$
for some fixed $T > 0$.

\begin{theorem}\label{main}
Given an integer $l \geq 1$, there exists $\varepsilon_* = \varepsilon_*(l) > 0$ such that the equation \eqref{eqmain0} has at least $l$ generalized $T$-periodic solutions for each $\varepsilon \in \,]0,\varepsilon_*[\,$.
\end{theorem}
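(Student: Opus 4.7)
The strategy follows the roadmap outlined in the introduction. Transform \eqref{eqmain0} into the autonomous extended Hamiltonian system \eqref{primo} on $(\mathbb{R}^N\setminus\{0\})\times\mathbb{R}^N\times(\mathbb{R}/T\mathbb{Z})\times\mathbb{R}$ with
$$
\mathcal{H}_\varepsilon(u,v,t,\tau)=\tfrac{1}{2}|v|^2-\tfrac{1}{|u|}+\tau-\varepsilon\,U(t,u,\varepsilon),
$$
whose dynamics on $\mathcal{H}_\varepsilon^{-1}(0)$ reproduces \eqref{eqmain0}. Apply Levi-Civita regularization if $N=2$ or Kustaanheimo-Stiefel regularization followed by symplectic reduction by the Hopf $S^{1}$-action if $N=3$, to obtain a smooth exact symplectic manifold $(M,\omega)$ of dimension $2N+2$, a smooth Hamiltonian $\mathcal{K}_\varepsilon\in\mathcal{C}^\infty(M\times[0,1])$, and a well-defined winding number $\eta$ on the set of closed orbits of $X_{\mathcal{K}_\varepsilon}$. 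The regularization dictionary yields a bijection between closed orbits of $X_{\mathcal{K}_\varepsilon}$ in $\mathcal{K}_\varepsilon^{-1}(0)$ with $\eta=1$ and generalized $T$-periodic solutions of \eqref{eqmain0}; it therefore suffices to exhibit at least $l$ such closed orbits.

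At $\varepsilon=0$ the regularized Hamiltonian decouples into an integrable Keplerian block (a harmonic-oscillator-type piece whose frequency depends on the Kepler energy, encoded by the conjugate momentum $\tau$) and the uniform block in $(t,\tau)$. An explicit computation in action-angle-like coordinates produces, for each integer $n\geq 1$, a compact connected submanifold $P_n\subset\mathcal{K}_0^{-1}(0)$ of dimension $2N$ filled with closed orbits of $X_{\mathcal{K}_0}$ having winding number $\eta=1$; under the regularization map these orbits correspond to all Kepler motions of minimal period $T/n$, including circular, elliptic and rectilinear ones. Lifting by the common regularized period $\tau_n$ gives a subset $\Sigma_n\subset\textnormal{Per}_{\mathcal{K}_0}^{0}$ with $\pi(\Sigma_n)=P_n$ for which conditions (i) and (ii) of Section~\ref{sec2} are immediate.

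The delicate step is non-degeneracy (iii). Since $\dim\Sigma_n=2N$ equals the maximal value allowed by the ambient dimension $2N+2$, the criterion stated at the end of Section~\ref{sec2} reduces (iii) to the condition $\Gamma\neq\textnormal{Id}$ for the monodromy block $\Gamma$ attached to any closed orbit in $P_n$. I plan to pick a convenient reference orbit (a circular Keplerian motion is particularly handy) and write out $\Gamma$ directly from the explicit flow. Directions tangent to $P_n$ produce eigenvectors of $\Gamma$ with eigenvalue $1$; the unique transverse direction inside $\mathcal{K}_0^{-1}(0)$ corresponds to varying the Kepler energy, and here Kepler's third law is decisive: since $dT_{\mathrm{Kep}}/dE_{\mathrm{Kep}}\neq 0$, a perturbation of the energy alters the period in regularized time and introduces a non-trivial off-diagonal shear entry in $\Gamma$, ruling out $\Gamma=\textnormal{Id}$. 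This twist computation is the main obstacle, and is where the arguments for $N=2$ and $N=3$ diverge in detail (Sections~\ref{sec4} and~\ref{sec5}), the three-dimensional case being further complicated by the $S^{1}$-reduction inherent to Kustaanheimo-Stiefel.

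Once non-degeneracy is in hand, apply the corollary of Weinstein's Theorem~1.4 recalled in Section~\ref{sec2} to each of the compact non-degenerate periodic manifolds $\Sigma_1,\dots,\Sigma_l$. Because the $P_n$ sit at different values of the Kepler energy they are pairwise disjoint compact subsets of $M$, so pairwise disjoint neighborhoods $\mathcal{U}_1,\dots,\mathcal{U}_l$ may be chosen, and Weinstein delivers $\varepsilon_1,\dots,\varepsilon_l>0$ such that for $0<\varepsilon<\varepsilon_n$ the system $\dot x=X_{\mathcal{K}_\varepsilon}(x)$ restricted to $\mathcal{K}_\varepsilon^{-1}(0)$ has at least $\lceil\textnormal{Cat}(P_n)/2\rceil\geq 1$ closed orbit in $\mathcal{U}_n$. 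Setting $\varepsilon_*(l):=\min_{1\leq n\leq l}\varepsilon_n$, every $\varepsilon\in(0,\varepsilon_*(l))$ yields at least $l$ geometrically distinct closed orbits. Since $\eta$ is integer-valued and continuous under small perturbation, each bifurcating orbit inherits the value $\eta=1$ from the $\Sigma_n$ from which it bifurcates, hence corresponds via the regularization dictionary to a generalized $T$-periodic solution of \eqref{eqmain0}, completing the proof.
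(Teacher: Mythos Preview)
Your proposal follows essentially the same route as the paper: extend to an autonomous system, regularize (Levi-Civita for $N=2$, Kustaanheimo--Stiefel followed by $S^1$-reduction for $N=3$), identify the $2N$-dimensional periodic manifolds $\Sigma_n$, verify Weinstein's non-degeneracy via the criterion $\Gamma\neq\mathrm{Id}$, and bifurcate. Your intuition that the non-degeneracy comes from the twist encoded in Kepler's third law (varying the Kepler energy shifts the period and produces a shear in the monodromy) is exactly the mechanism the paper exploits.

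One point in your plan needs correction. Condition~(iii) must hold at \emph{every} point of $\Sigma_n$, not only at a single convenient reference orbit. Circular motions are not generic in $P_n$, which also contains ellipses of all eccentricities and rectilinear collision orbits, and there is no symmetry of the unperturbed regularized system acting transitively on $P_n$ that would reduce the check to one orbit. The paper therefore works at an arbitrary initial point $X_0=(z_0,w_0,t_0,\tau_k)\in\Lambda_k$ and exhibits the explicit vector $Y_\star=(z_0,0,0,-2\tau_k)\in T_{X_0}(\mathcal{K}_0^{-1}(0))$; a direct integration of the variational equations then shows that $(\mathrm{Id}-P)Y_\star$ and $J\nabla\mathcal{K}_0(X_0)$ are linearly independent for all such $X_0$. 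The vector $Y_\star$ is precisely your ``vary the Kepler energy'' direction (the nonzero $\tau$-component), adjusted in the $z$-slot to remain tangent to the energy level, so your heuristic is correct---it just has to be carried out uniformly, not at one orbit. Two minor remarks: the regularization correspondence is not literally a bijection (Levi-Civita is two-to-one), though only the forward direction of Lemma~\ref{l1} is needed; and for distinctness of the resulting $u_k$ the paper compares the physical energies $E_k(t)$ via \eqref{safe} rather than relying solely on disjointness of the $\mathcal{U}_k$, since a priori distinct regularized orbits could project to the same generalized solution.
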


It must be noticed that these solutions can have collisions, meaning that $u(t)$ can take the value $u = 0$ at some instants. This forces us to be precise on the notion of solution that is employed.

\begin{definition}\label{def: bs}
A \emph{generalized solution} to \eqref{eqmain0} is a continuous function $u: \mathbb{R} \to \mathbb{R}^N$ satisfying the following conditions:
\begin{itemize}
\item[(i)] the set $Z= \{t \in \mathbb{R} : u(t) = 0\}$ of collisions is discrete,
\item[(ii)] for any open interval $I \subset \mathbb{R} \setminus Z$, the function $u$ is $\mathcal{C}^\infty(I)$ and satisfies \eqref{eqmain0} on $I$, 
\item[(iii)] for any $t_0 \in Z$, the limits
$$
\lim_{t \to t_0} \frac{u(t)}{\vert u(t) \vert} \qquad \mbox{and} \qquad \lim_{t \to t_0}\left( \frac12 \vert \dot u(t) \vert^2- \frac{1}{\vert u(t) \vert}\right)
$$
of collision direction and collision energy exist and are finite.
\end{itemize}
\end{definition}

It is worth noticing that in the case $N=1$ the above definition reduces to the one given in \cite{Or}, asking for the preservation of the energy at the collisions. In the higher dimensional setting the preservation of the collision direction also plays a role. In particular for 
$N=2$ these two requirements together make the above notion of generalized solution equivalent to the one obtained via Levi-Civita regularization (see Section \ref{sec41} below). We suspect that for $N=3$ an equivalence with Kustaanheimo-Stiefel regularization can be proved, but we have not investigated this in detail.

\section{Proof in the 2-d case}\label{sec4}

In this section, we give the proof of Theorem \ref{main} for $N = 2$. We split our arguments in some steps.

\subsection{The regularized system}\label{sec41}

We follow the approach taken in \cite{Zha16}. That paper dealt with a Kepler problem in one degree of freedom ($N=1$) but the same strategy applies to $N=2$.

To start with we consider the system \eqref{eqmain0} understood in a classical sense ($u \in \mathbb{R}^2 \setminus \{0\}$). It can be transformed into the Hamiltonian system
$$
\dot u = \partial_v H_\varepsilon(t,u,v), \qquad \dot v = -\partial_u H_\varepsilon(t,u,v)
$$
with
$$
H_\varepsilon(t,u,v) = \frac12 \vert v \vert^2 - \frac{1}{\vert u \vert} - \varepsilon \,U(t,u,\varepsilon), \qquad 
(u,v) \in (\mathbb{R}^2 \setminus \{0\}) \times \mathbb{R}^2.
$$
Following a traditional approach we embed this time-periodic system into an autonomous Hamiltonian system with an additional degree of freedom. To do this we introduce a new unknown $\tau$ and consider the phase space
$(\mathbb{R}^2 \setminus \{0\}) \times \mathbb{R}^2 \times \mathbb{T} \times \mathbb{R}$ with $\mathbb{T} = \mathbb{R} / T\mathbb{Z}$ and coordinates $(u,v,t,\tau)$. The associated symplectic form is
$$
\sum_{i=1}^2 du_i \wedge dv_i + dt \wedge d\tau
$$
and the Hamiltonian function
$$
\mathcal{H}_\ve(u,v,t,\tau) = \tau + \frac{\vert v \vert^2}{2} - \frac{1}{\vert u \vert} - \ve \,U(t,u,\ve).
$$
Next we consider the canonical map associated to Levi-Civita change of variables
$$
LC: (\mathbb{C} \setminus \{0\}) \times \mathbb{C} \to (\mathbb{C} \setminus \{0\}) \times \mathbb{C},
\qquad LC(z,w) = \left(z^2,  \frac{w}{2 \bar{z}}\right) = (u,v).
$$
From now one $\mathbb{C}$ and $\mathbb{R}^2$ will be identified, although differentiability will be understood in a real sense. The Hamiltonian function $\widehat{\mathcal{H}}_\ve = \mathcal{H}_\ve \circ LC$ is defined in the same phase space, 
$\widehat{\mathcal{H}}_\ve = \widehat{\mathcal{H}}_\ve(z,w,t,\tau)$.

To eliminate the singularity at $z = 0$ we multiply by $\vert z \vert^2$ to obtain the new Hamiltonian function
$$
\mathcal{K}_\ve(z,w,t,\tau) = \tau \vert z \vert^2 + \frac{\vert w \vert^2}{8} - 1 - \ve P(t,z,\ve)
$$
with $P(t,z,\ve) = \vert z \vert^2 U(t,z^2,\ve)$.

To write the associated system we use the symbol $\nabla_z P(t,z,\ve)$ for the complex-valued function 
$$
\nabla_z P(t,z,\ve) = \partial_{x} P(t,z,\ve) + i \partial_{y} P(t,z,\ve)
$$ 
with $z = x + iy$. The Hamiltonian system is
\begin{equation}\label{lc0}
\left\{
\begin{array}{l}
\vspace{0.2cm}
\displaystyle z' = \frac{w}{4}\\
\vspace{0.2cm}
w' = - 2 \tau z + \ve \, \nabla_z P(t,z,\ve)\\
\vspace{0.2cm}
t' = \vert z \vert^2 \\
\tau' = \varepsilon \, \partial_t P(t,z,\ve)
\end{array}
\right.
\end{equation}
which we can be written in vector notation as
\begin{equation}\label{lc}
JX' = \nabla \mathcal{K}_\ve(X), 
\end{equation}
where $X = (z,w,t,\tau)^*$ and $J$ is the $6 \times 6$ matrix (described in $2 \times 2$ blocks)
$$ 
J = \left( \begin{array}{ccc}
0 & -\textnormal{Id}_{2} & 0  \\
\textnormal{Id}_{2} & 0 & 0  \\
0 & 0 & J_2
\end{array}\right), \qquad \mbox{ with } \quad 
J_2 = \left( \begin{array}{cc}
0 & -1  \\
1 & 0 
\end{array}\right).
$$
Now the phase space can be enlarged to $\mathbb{C}^2 \times \mathbb{T} \times \mathbb{R}$.

At this point it is important to remark that these successive changes of Hamiltonian functions do not produce equivalent systems. When passing from $\mathcal{H}_\ve$ to $\widehat{\mathcal{H}}_\ve$ we have employed the map $LC$ that is canonical but not one-to-one. Later we have passed from $\widehat{\mathcal{H}}_\ve$ to $\mathcal{K}_\ve = \vert z \vert^2 \widehat{\mathcal{H}}_\ve$. At the energy level $\widehat{\mathcal{H}}_\ve = 0$ the two systems have the same orbits and they could be thought as equivalent. However this is not exact because we have enlarged the phase space to include the collision set $z = 0$.

The smooth system \eqref{lc0} is defined on the six dimensional manifold $\mathbb{C}^2 \times \mathbb{T} \times \mathbb{R}$ and we are interested in the energy level $\mathcal{K}_\ve = 0$. We want to relate the closed orbits of this system 
with the generalized periodic solutions of system \eqref{eqmain0}. First we introduce the notion of \emph{index} of a closed orbit. Given a non-constant zero-energy periodic solution $X(s) = (z(s),w(s),t(s),\tau(s))$ of \eqref{lc0}, with minimal period $S > 0$, we lift its angular component to a smooth function $t: \mathbb{R} \to \mathbb{R}$ satisfying $t(s+S) = t(s) + \eta T$ for some $\eta \in \mathbb{Z}$. We call this integer $\eta$ the index of the solution $X(s)$. Notice that, in view of $t' = \vert z \vert^2$, we have $\eta \geq 1$.

\begin{lemma}\label{l1}
For any $\varepsilon > 0$, any periodic solution $X(s)$ of \eqref{lc0} lying on the energy level $\mathcal{K}_\ve^{-1}(0)$ gives rise to a generalized $\eta T$-periodic solution of \eqref{eqmain0}, where $\eta$ is the index of $X(s)$. 
\end{lemma}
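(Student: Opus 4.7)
The plan is to construct $u(t)$ from a zero-energy periodic orbit $X(s)$ of \eqref{lc0} by inverting the time reparametrization $t(s)$ and then composing with the Levi--Civita map on the $z$-component. Concretely, I would set $u(t) := z(s(t))^2$, where $s \mapsto t(s)$ is the lift appearing in the definition of the index. Because $t'(s) = |z(s)|^2 \ge 0$, the function $t(s)$ is nondecreasing, and it will be strictly increasing once we show that the collision set $\{s : z(s) = 0\}$ is discrete. The latter is where the energy constraint does the work: evaluating $\mathcal{K}_\varepsilon = 0$ at a point $s_0$ with $z(s_0)=0$ gives $|w(s_0)|^2/8 = 1$ (since $P(t,0,\varepsilon) \equiv 0$), hence $w(s_0) \neq 0$; from $z'(s_0) = w(s_0)/4$ and the smoothness of $X$, every zero of $z$ is isolated. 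Thus $t(\cdot)$ is a strictly increasing $\mathcal{C}^0$-bijection of $\mathbb{R}$ with $\mathcal{C}^\infty$ inverse away from the preimages of collisions, and $u(t)$ is a well-defined continuous function with discrete zero set $Z = \{t(s_0) : z(s_0) = 0\}$.

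Next I would verify conditions (ii) and (iii) of Definition \ref{def: bs}. Away from $Z$, $s(t)$ is smooth and the chain rule combined with $z' = w/4$ gives
$$
\dot u(t) = \frac{2 z z'}{|z|^2} = \frac{z w}{2|z|^2} = \frac{w}{2\bar z},
$$
so the curve $(u(t), \dot u(t))$ is exactly the image of $(z(s), w(s))$ under the Levi--Civita map $LC$. Since $LC$ is canonical, the Hamiltonian equations for $\mathcal{K}_\varepsilon$ on the level $\mathcal{K}_\varepsilon = 0$ (equivalently $\widehat{\mathcal{H}}_\varepsilon = 0$ when $z \neq 0$) translate into the equations for $\mathcal{H}_\varepsilon$, which in turn yield \eqref{eqmain0} on each open interval in the complement of $Z$. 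For the limits at a collision time $t_0 = t(s_0)$: expanding $z(s) = \tfrac{w(s_0)}{4}(s - s_0) + O((s-s_0)^2)$ with $w(s_0) \neq 0$, I get
$$
\frac{u(t)}{|u(t)|} = \frac{z(s(t))^2}{|z(s(t))|^2} \;\longrightarrow\; \frac{w(s_0)^2}{|w(s_0)|^2}
$$
as $t \to t_0$, so the collision direction exists. For the collision energy, the identity $\mathcal{H}_\varepsilon = 0$ (valid along the orbit outside $z=0$) gives $\tfrac12 |\dot u|^2 - 1/|u| = -\tau - \varepsilon U(t,u,\varepsilon)$, whose right-hand side extends continuously to $t_0$ with limit $-\tau(s_0) - \varepsilon U(t(s_0), 0, \varepsilon)$.

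Finally, periodicity: by definition of the index, $X(s+S) = X(s)$ and $t(s+S) = t(s) + \eta T$, so the inverse satisfies $s(t + \eta T) = s(t) + S$; consequently
$$
u(t + \eta T) = z(s(t)+S)^2 = z(s(t))^2 = u(t),
$$
so $u$ is $\eta T$-periodic. Combining the three items, $u$ is a generalized $\eta T$-periodic solution in the sense of Definition \ref{def: bs}.

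The only subtle point I anticipate is the verification at collisions: one must ensure $w(s_0) \neq 0$ to guarantee both that collisions are isolated (so that $s(t)$ is a well-defined continuous inverse) and that the collision direction limit is well-defined. Both facts rest on the single observation that the Hamiltonian constraint $\mathcal{K}_\varepsilon = 0$ forces $|w|^2 = 8$ on the collision set, which is the one genuinely nontrivial input of the argument.
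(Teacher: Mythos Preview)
Your argument is correct and follows essentially the same route as the paper: define $u(t)=z(s(t))^2$, use the energy constraint $\mathcal{K}_\varepsilon=0$ to force $|w(s_0)|^2=8$ at collisions (hence simple zeros of $z$ and discreteness of $Z$), and then verify (ii) and (iii) of Definition~\ref{def: bs} via the Levi--Civita relation and the zero-energy identity. The only difference is cosmetic: for (ii) the paper carries out the explicit computation of $\ddot u$ and of $\nabla_z P$, whereas you invoke the canonicity of $LC$ together with the fact that on $\mathcal{K}_\varepsilon^{-1}(0)\cap\{z\neq0\}$ the $\mathcal{K}_\varepsilon$-flow is a time-reparametrization of the $\widehat{\mathcal{H}}_\varepsilon$-flow; both are valid. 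One small slip: since $\mathcal{H}_\varepsilon=\tau+\tfrac12|v|^2-\tfrac{1}{|u|}-\varepsilon U$, the collision-energy identity reads $\tfrac12|\dot u|^2-\tfrac{1}{|u|}=-\tau+\varepsilon\,U(t,u,\varepsilon)$, not $-\tau-\varepsilon U$.
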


\begin{proof}
Since $\eta \geq 1$, it is immediately seen that the lifting of the angular component of $X(s)$ (still denoted by $t(s)$) is a global (increasing) homeomorphism of $\mathbb{R}$; accordingly, we can consider its inverse function $s(t)$ and define	
$$
u(t) = z^2(s(t)), \qquad t \in \mathbb{R}.
$$
Notice that $s(t+ \eta T) = s(t) + S$; as a consequence, the function $u(t)$ is $\eta T$-periodic. 
The perturbation $P$ vanishes at $z = 0$ and so from $\mathcal{K}_\ve(X(s)) = 0$ we deduce that $\vert z'(s) \vert^2 = \tfrac12$ if $z(s) = 0$. In particular the zeros of $z(s)$ are non-degenerate and so the set of zeros of $u(t)$ is discrete, thus proving (i) in the definition of bouncing solutions.

As for (iii), the existence of the first limit follows from the fact that
$$
\frac{u(t)}{\vert u(t) \vert} = \left( \frac{z(s(t))}{\vert z(s(t))\vert }\right)^2
$$
and that the zeros of $z(s)$ are simple. The existence of the second limit is a consequence of the equality, valid
in $\mathbb{R} \setminus Z$,
$$
\frac12 \vert \dot u \vert^2 - \frac{1}{\vert u \vert}  = \frac{1}{\vert z \vert^2} \left(\frac{\vert w \vert^2}{8} -1 \right) \\
= - \tau + \ve \, U(t,z^2,\ve)
$$
and of the fact that the right hand-side is a continuous function in all its variables. Notice that in the above computations we have used the fact that $X(s)$ lies on the zero-set of $\mathcal{K}_\ve$. 

Finally, to prove that the condition (ii) in Definition \ref{def: bs} holds we compute, using both the differential equation and the zero-energy relation, 
\begin{align*}
\ddot u & = 
 2 \,\frac{z z'' \vert z \vert^2 - z^2 \vert z' \vert^2}{\vert z \vert^6}  \\
 & = -\frac{\tau \vert z \vert^2 z^2 - \frac{\varepsilon}{2} \vert z \vert^2 z \nabla_z P(t,z,\ve) + \frac{\vert w \vert^2}{8} z^2}{\vert z \vert^6}\\
 &  = - \frac{z^2}{\vert z \vert^6} - \varepsilon \frac{z^2 P(t,z,\ve) - \frac{\vert z \vert^2}{2} z\nabla_z P(t,z,\ve)}{\vert z \vert^6}.
\end{align*}
We conclude by observing that
$$
\nabla_z P(t,z,\ve) = 2 U(t,z^2,\ve)z + 2 \vert z \vert^2 \bar{z} \left( \nabla_u U \right)(t,z^2,\ve)
$$
with $\nabla_u U = \tfrac{\partial U}{\partial u_1} + i \tfrac{\partial U}{\partial u_2}$ and
$$
\vert z \vert^2 z \nabla_z P(t,z,\ve) = 2 P(t,z,\ve)z^2 + 2 \vert z \vert^6 \left( \nabla_u U \right)(t,z^2,\ve).
$$
\end{proof}

The above lemma will play a role in the proof of the main theorem. Next we are going to describe how to produce a zero-energy closed orbit of the system \eqref{lc0} if we are given a generalized $T$-periodic solution of \eqref{eqmain0}. This converse process will not be employed in the proof of the main theorem but it is included to justify the definition of generalized solution.

As a preliminary observation note that the Hamiltonian function $\mathcal{K}_\ve$ is invariant under the involution
$$
\mathcal{J}: \mathbb{C}^2 \times \mathbb{T} \times \mathbb{R} \to \mathbb{C}^2 \times \mathbb{T} \times \mathbb{R}, \qquad
\mathcal{J}(z,w,t,\tau) = (-z,-w,t,\tau).
$$
From $\mathcal{K}_\ve \circ \mathcal{J} = \mathcal{K}_\ve$ we deduce that the set $\mathcal{K}_\ve^{-1}(0)$ is invariant under $\mathcal{J}$.

Let us now concentrate on the converse process mentioned above. Let $u(t)$ be a generalized $T$-periodic solution of \eqref{eqmain0}. From the classical estimates at collisions (see for instance \cite{Spe69}) we know that $\frac{1}{\vert u(t) \vert}$ is locally integrable. Define
$$
S = \int_0^T \frac{dt}{\vert u(t) \vert}.
$$
We are going to construct a solution $X(s)$ of \eqref{lc0} satisfying $\mathcal{K}_\ve(X(s)) = 0$ and one of the following conditions
\begin{itemize}
\item[(i)] $X(s+S) = X(s)$, for $s \in \mathbb{R}$,
\item[(ii)] $X(s+S) = \mathcal{J} X(s)$, for $s \in \mathbb{R}$.
\end{itemize}
In the second case $X(s)$ has period $2S$. The Hamiltonian system has no equilibria at $\mathcal{K}_\ve = 0$ and so we obtain a closed orbit.

The first step will be to define $X(s) = (z(s),w(s),t(s),\tau(s))$. The coordinate $t$ is defined with the help of Sundman integral
$$
s(t) = \int_0^t \frac{d\tau}{\vert u(\tau) \vert}.
$$
It defines a homeomorphism of $\mathbb{R}$ satisfying
$$
s(t+T) = s(t) + S.
$$
Then $t = t(s)$ is the inverse homeomorphism. At this moment we can only say that $t(s)$ is smooth on $\mathbb{R} \setminus Z^*$ with $Z^* = t^{-1}(Z)$. Note that $Z^*$ is a discrete set.

The definition of generalized solution implies that the function $t \in \mathbb{R} \setminus Z \mapsto \frac{u(t)}{\vert u(t) \vert} \in \mathbb{S}^1$ admits a continuous extension from $\mathbb{R}$ to $\mathbb{S}^1$ that is $T$-periodic. Then we can find a continuous argument. That is, a continuous function $\theta: \mathbb{R} \to \mathbb{R}$ satisfying
$$
\theta(t + T) = \theta(t) + 2\pi m, \qquad t \in \mathbb{R},
$$
for some integer $m$ and
$$
u(t) = \vert u(t) \vert e^{i\theta(t)}, \qquad t \in \mathbb{R}.
$$
From this identity we deduce that $\theta(t)$ is $\mathcal{C}^\infty$ on each interval contained in $\mathbb{R} \setminus Z$. Define
\begin{equation}\label{polar}
z(s) = \vert u(t(s)) \vert^{1/2} e^{i\theta(t(s))/2}, \qquad s \in \mathbb{R}.
\end{equation}
This is a continuous function that can be periodic or anti-periodic,
$$
z(s+S) = \pm z(s), \qquad s \in \mathbb{R},
$$
depending on whether $m$ is even or odd. We know that $z(s)$ is smooth outside $Z^*$. For each $s \in \mathbb{R} \setminus Z^*$ we define the remaining coordinates
$$
w(s) = 4z'(s), \qquad \tau(s) = - E(t(s)) + \ve \,U(t(s),z(s)^2,\ve)
$$
with
$$
E(t) = \frac12 \vert \dot u(t) \vert^2 - \frac{1}{\vert u(t) \vert}.
$$
The energy can be expressed ($u = z^2$) as
\begin{equation}\label{en}
E(t(s)) = \frac{1}{\vert z(s) \vert^2}\left( \frac{\vert w(s) \vert^2}{8} - 1 \right)
\end{equation}
and so our candidate to solution of \eqref{lc0} satisfies $\mathcal{K}_\ve(X(s)) = 0$ if $s \in \mathbb{R} \setminus Z^*$.

Let us know prove that $X(s)$ is a solution of \eqref{lc0} on each interval contained in $\mathbb{R} \setminus Z^*$. The first and third equations are direct consequences of the above definitions. To check the second equation we differentiate the identity $u(t(s)) = z(s)^2$ and proceed as in the proof of Lemma \ref{l1} to obtain
$$
\ddot u = 2 \frac{z z'' \vert z \vert^2 - z^2 \vert z' \vert^2}{\vert z \vert^6}.
$$
From equation \eqref{eqmain0}, valid for each $t \in \mathbb{R} \setminus Z$, 
$$
\vert z \vert^6 \ve \, \left( \nabla_u U \right)(t,z^2,\ve) = \frac12 z \vert z \vert^2 w' + z^2 \left( 1 - \frac18 \vert w \vert^2 \right)
$$
and the formula for the energy \eqref{en} leads to the second equation.

To check the fourth equation we observe that $E(t)$ is smooth on $\mathbb{R} \setminus Z$ and satisfies
$$
\dot E(t) = \ve \, \langle \nabla_u U(t,u(t),\ve), \dot u(t) \rangle.
$$
Differentiating with respect to $s$ the formula defining $\tau(s)$ we obtain
$$
\tau'(s) = \ve \, \vert z(s) \vert^2 \left( \partial_t U \right)(t(s),z(s)^2,\ve).
$$

The last step will be to prove that $X(s)$ admits a continuous extension to the whole real line. Then it is easy to conclude that $X(s)$ is a solution of \eqref{lc0} defined for all $s \in \mathbb{R}$.

We already know that the functions $z(s)$ and $t(s)$ are continuous in $\mathbb{R}$. The definition of generalized solution implies that $E(t(s))$ admits a continuous extension and so the same can be said for $\tau(s)$. To extend $w(s)$ we fix 
$s_0 \in Z^*$ and some $\delta > 0$ such that $z(s) \neq 0$ if $0 < \vert s - s_0 \vert \leq \delta$. From the second equation
(valid is $s \notin Z^*$) we know that $w(s)$ is $\mathcal{C}^1$ on the compact intervals $[s_0,s_0+\delta]$ and
$[s_0-\delta,s_0]$. Here we are using that $z(s)$, $t(s)$ and $\tau(s)$ are continuous at $s_0$. From
$\mathcal{K}_\ve(X(s)) = 0$ if $0 < \vert s - s_0 \vert < \delta$ we deduce that the right and left limits
$w(s_0 \pm 0)$ satisfy $\vert w(s_0 \pm 0) \vert^2 = \tfrac12$. In consequence
$$
\lim_{s \to s_0^{\pm}} \frac{z(s)}{\vert z(s) \vert} = \sqrt{2} w (s_0 \pm 0).
$$
On the other hand the identity \eqref{polar} implies that 
$$
\lim_{s \to s_0} \frac{z(s)}{\vert z(s) \vert} = e^{i\theta(t_0)/2}.
$$
Hence $w(s_0 + 0) = w(s_0 - 0)$ and the function $w(s)$ is continuous at $s = s_0$. 

\begin{remark}
Using once again the identity $u(t(s)) = z(s)^2$ it is possible to prove that generalized solutions of \eqref{eqmain0} satisfy a law of reflection of velocities, namely
$$
\lim_{t \to t_0^+} \frac{\dot u(t)}{\vert \dot u(t) \vert} = - \lim_{t \to t_0^-} \frac{\dot u(t)}{\vert \dot u(t) \vert} 
$$ 
if $t_0 \in Z$.
\end{remark}

\subsection{Periodic manifolds for the regularized Kepler problem}

In this second step we are concerned with system \eqref{lc} when $\varepsilon = 0$, that is 
\begin{equation}\label{hs0}
JX' = \nabla \mathcal{K}_0(X). 
\end{equation}
More precisely, we are going to prove that, for any integer $k \geq 1$, system \eqref{hs0} admits a compact non-degenerate periodic manifold $\Sigma_k$, at the energy level $\mathcal{K}_{0}=0$ and with corresponding closed orbits of index $1$.

\subsubsection{The periodic manifolds}\label{sec321}

First, we set
$$
\Lambda_k = \left\{ X = (z,w,t,\tau) \in \mathcal{K}_0^{-1}(0) \, : \, \tau = \tau_k \right\},
$$
where
\begin{equation}\label{deftk}
\tau_k = \left( \frac{\sqrt{2} k  \pi}{T}\right)^{2/3}.
\end{equation}
Notice that from the definition it immediately follows that $\Lambda_k$ is diffeomorphic to 
$\mathbb{S}^3 \times \mathbb{S}^1$. In particular, it is compact. 

We now prove that $\Lambda_k$ is filled by closed orbits of \eqref{hs0}, meaning that for any $X_0 = (z_0,w_0,t_0,\tau_0) \in \Lambda_k$, the solution $X(\cdot) = X(\cdot;X_0)$ of \eqref{hs0} with $X(0) = X_0$ is periodic. To this end, we first observe that, writing $X(s) = (z(s),w(s),t(s),\tau(s))$, it holds that
$\tau(s) \equiv \tau_0 = \tau_k$ and
\begin{equation}\label{z}
 z(s) = z_0 \cos(\omega_k s) + \frac{w_0}{4\omega_k} \sin(\omega_k s),\qquad \mbox{ where } 	\quad \omega_k = \left( \frac{\tau_k}{2}\right)^{1/2}.
\end{equation}
Then, $z(s)$ is periodic with minimal period $\sigma_k = 2\pi/\omega_k$ and
$$
\vert z(s) \vert^2 = \vert z_0 \vert^2 \cos^2(\omega_k s) + \frac{\langle z_0, w_0 \rangle}{2\omega_k}\cos(\omega_k s) \sin(\omega_k s) + \frac{\vert w_0 \vert^2}{16 \omega_k^2} \sin^2(\omega_k s), 
$$ 
so that
$$
\int_{0}^{k \sigma_k} \vert z(s) \vert^2 \,ds = \frac{k \pi}{\omega_k}\left( \vert z_0 \vert^2 + \frac{\vert w_0 \vert^2}{16 \omega_k^2}\right) = \frac{k \pi}{\omega_k \tau_k} = T.
$$
Therefore $t(k \sigma_k) = t(0) + T$, proving that the orbit $X(s)$ is closed with minimal period $k \sigma_k$ and 
index $1$. Notice that the period of the periodic solution $X(s;X_0)$ is independent of $X_0$; from now, we will denote it by
$$
S_k = k \sigma_k = \frac{2k\pi}{\omega_k} = 2k \pi \sqrt{\frac{2}{\tau_k}}.
$$ 

Finally, let us define $\Sigma_k$ as the subset of $M \times \,]0,\infty[\,$, $M = \mathbb{C}^2 \times \mathbb{T} \times \mathbb{R}$, defined by
$$
\Sigma_k = \Lambda_k \times \{S_k\}.
$$
Then the conditions (i) and (ii) in the definition of periodic manifold hold trivially.

\subsubsection{Non-degeneracy}

For each point $X_0 \in M$ the tangent space $T_{X_0}(M)$ will be identified to $\mathbb{C}^2 \times \mathbb{R}^2$. Tangent vectors will be denoted by $Y = (Y_1,Y_2,Y_3,Y_4)$ with $Y_1, Y_2 \in \mathbb{C}$ and $Y_3, Y_4 \in \mathbb{R}$. The linearized system along the solution $X(s;X_0)$ is
\begin{equation}\label{linsys}
J Y' = D^2 \mathcal{K}_0(X(s;X_0)) Y.
\end{equation}
When $X_0 \in \Lambda_k$ this linear system is periodic with period $S_k$ and Floquet theory is applicable. The monodromy operator at $X_0$ can be defined as
$$
P(Y(0)) = Y(S_k)
$$
for each $Y(s)$ solution of \eqref{linsys}.

The compact periodic manifold has dimension $4$ and the phase space $M$ has dimension $6$. To check the non-degeneracy 
of $\Sigma_k$ we apply the conclusions of Section \ref{sec2} and we must prove that the sub-matrix $\Gamma$ is not the identity. This will be proved as soon as we obtain, for each $X_0 \in \Lambda_k$, a vector $Y_\star \in T_{X_0}(\mathcal{K}_0^{-1}(0))$ such that $(\textnormal{Id} - P)Y_\star$ is not collinear with 
$J\nabla \mathcal{K}_0(X_0)$.

The tangent space of $\mathcal{K}_0^{-1}(0)$ can be described as 
$$
T_{X_0}(\mathcal{K}_0^{-1}(0)) = \left\{ Y \in \mathbb{C}^2 \times \mathbb{R}^2 \, : \,
2 \tau_k \langle z_0, Y_1 \rangle + \frac{1}{4} \langle w_0, Y_2 \rangle + \vert z_0 \vert^2 Y_4 = 0\right\}
$$
and we will prove that $Y_\star = (z_0,0,0,-2\tau_k)$ is a vector in the above conditions. This proof involves some computations and we need to express \eqref{linsys} in coordinates, 
$$
\left\{
\begin{array}{l}
\vspace{0.2cm}
Y_1' = \frac{1}{4}Y_2\\
\vspace{0.2cm}
Y_2' = - 2 \tau_k Y_1 - 2 z(s) Y_4\\
\vspace{0.2cm}
Y_3' = 2 \langle z(s), Y_1 \rangle \\
Y_4' = 0.
\end{array}
\right.
$$
Let $Y(s)$ be the solution with initial condition $Y(0) = Y_\star$.
We know that $P(Y_\star) = Y(S_k)$ and we are going to compute this vector. From the fourth equation $Y_4(s) = -2\tau_k$ and the first and second equations lead to 
$$
Y_1'' + \omega_k^2\, Y_1 = \tau_k z(s), \qquad Y_1(0) = z_0, \quad Y_1'(0) = 0.
$$
The solution of this Cauchy problem is
\begin{equation}\label{first}
Y_1(s) = z_0 \cos (\omega_k s) + \frac{\tau_k w_0}{8 \omega_k^3} \sin(\omega_k s) + \frac{\tau_k s}{2 \omega_k}
\left( z_0 \sin  (\omega_k s) - \frac{w_0}{4\omega_k} \cos  (\omega_k s)\right).
\end{equation}
Then 
$$
Y_1(S_k) = z_0 - \frac{k\pi}{2\omega_k}w_0.
$$
From $Y_2(s) = 4 Y_1'(s)$ we obtain
$$
Y_2(S_k) = \frac{4 k\pi \tau_k}{\omega_k} z_0.
$$
From the third equation
$$
Y_3(S_k) = 2 \int_0^{S_k} \langle z(s), Y_1(s) \rangle \,ds.
$$
To compute this integral we first observe that
$$
\int_0^{S_k} s \sin (\omega_k s) \cos (\omega_k s) \,ds = -\frac{k\pi}{2\omega_k^2}
$$
and that
$$
\int_0^{S_k} s \sin^2 (\omega_k s) \,ds = \int_0^{S_k} s \cos^2 (\omega_k s) \,ds.
$$ 
Then the identities \eqref{z} and \eqref{first} together with the standard properties of the inner product lead to
$$
Y_3(S_k) = \frac{k\pi}{\omega_k} \left( \vert z_0 \vert^2 + \frac{3}{16 \omega_k^2}\vert w_0 \vert^2\right)
$$
and using the fact that $\mathcal{K}_0(X_0) = 0$ together with $\tau(s) = \tau_k$ the above expression can be written as
$$
Y_3(S_k) = \frac{k\pi}{\omega_k} \left( - 2\vert z_0 \vert^2 + \frac{3}{\tau_k}\right)
$$
Now it is clear that the vectors
$$
Y(S_k) - Y(0) = \left( -\frac{k\pi}{2\omega_k}w_0, \frac{4k\pi \tau_k}{\omega_k}z_0, -\frac{2k\pi}{\omega_k} \left( \vert z_0 \vert^2 - \frac{3}{2 \tau_k}\right),0\right)
$$
and
$$
J \nabla \mathcal{K}_0(X_0) = \left( -\frac{w_0}{4}, 2\tau_k z_0, -\vert z_0 \vert^2, 0\right)
$$
are linearly independent over the real numbers.

\begin{remark}\label{non-deg}
It is not hard to prove that $\mu = 1$ is the only Floquet multiplier of system \eqref{linsys}. In consequence the algebraic multiplicity of $\mu = 1$ is $6$, a number greater than the dimension of $\Sigma$. This means that the notions of non-degeneracy introduced in \cite{Bot,Mos,Wei77} are not applicable to our problem. In this direction see also the remark in page 246 of \cite{Wei78}.
\end{remark}

\subsection{The perturbation argument}

We are now ready to conclude the proof by a direct application of the perturbation result stated in Section \ref{sec2}.

The symplectic form in $M = \mathbb{C}^2 \times \mathbb{T} \times \mathbb{R}$
$$
\omega = dx \wedge du + dy \wedge dv + dt \wedge d\tau
$$
where $z = x + iy$, $w = u + iv$, is exact with primitive $x du + y dv - \tau dt$. Moreover $E = 0$ is a regular value of $\mathcal{K}_0$ and $\Sigma_k$ is a compact non-degenerate periodic manifold. The category of $\mathbb{S}^3 \times \mathbb{S}^1$ will play a role to count the number of bifurcations. From our point of view it is sufficient to know that this category
is at least $2$ so that there is at least one branch of closed orbits emanating from each $\Sigma_k$. Let us fix any integer $l \geq 1$ and select some $\ve_1 > 0$ and neighborhoods $\mathcal{U}_k$ of $\Sigma_k$ in $M \times \mathbb{R}$, $k = 1,\ldots,l$, such that
\begin{equation}\label{safe}
\vert \tau - \ve U(t,z^2,\ve) - \tau_k \vert < \frac12 \min_{1 \leq k < h \leq l} (\tau_h - \tau_k)
\end{equation}
for each $(z,w,t,\tau) \in \mathcal{U}_k$ and $0 \leq \ve \leq \ve_1$. For $0 < \ve \leq \ve^*(l) < \ve_1$ small enough, a closed orbit to \eqref{lc} can be found lying on the energy level $\mathcal{K}_\ve^{-1}(0)$ and in $\mathcal{U}_k$, for any $k =1,\ldots,l$. Using Lemma \ref{l1} we obtain generalized periodic solutions of \eqref{eqmain0} which will be denoted by $u_k(t)$. In principle the period of these solutions will be a multiple of $T$. To complete the proof of Theorem \ref{main} we must prove that these solutions have indeed period $T$ and that they are different. Since for $\ve = 0$ the index of the closed orbit is $\eta = 1$ by construction and the index cannot change for small perturbations, we conclude that $u_k(t)$ has period $T$. To prove that $u_h(t)$ and $u_k(t)$ do not coincide if $1 \leq k < h \leq l$ we recall the formula expressing the energy of a generalized soluion $u(t)$ in terms of the associated $(z(s),w(s),t(s),\tau(s))$, namely
$$
E(t) := \frac12 \vert \dot u(t) \vert^2 - \frac{1}{\vert u(t) \vert} = -\tau(s(t)) + \ve U(t,z^2(s(t)),\ve).
$$
In view of \eqref{safe} we conclude that $\vert E_k(t) + \tau_k \vert < \frac12 (\tau_h-\tau_k)$ and 
$\vert E_h(t) + \tau_h \vert < \frac12 (\tau_h-\tau_k)$ and this implies that $E_k(t) \neq E_h(t)$ everywhere.

\section{Proof in the 3-d case}\label{sec5}

In this section, we give the proof for $N=3$. We are going to follow the discussion of the previous section, illustrating the main differences. Levi-Civita regularization was the main tool in two dimensions, now we will apply the Kustaanheimo-Stiefel regularization as described in \cite{Zha15}.

The skew-field of quaternions will be denoted by $\mathbb{H}$. Given $z = z_0 + z_1 i + z_2 j + z_3 k$ in $\mathbb{H}$, the real part will be denoted by $\Re(z) = z_0$ and the imaginary part by $\Im (z) = z_1 i + z_2 j + z_3 k$. Quaternions with vanishing real part are called purely imaginary and typically they will be denoted by $u$. The real vector space
$$
\mathbb{I} \mathbb{H} = \{ u \in \mathbb{H} \, : \, \Re (u) = 0\}
$$
has three dimensions and will play an important role.
Given an arbitrary quaternion $z \in \mathbb{H}$, the number $u = \bar{z} i z$ is purely imaginary. This fact motivates the definition of the map
$$
KS: \mathbb{H} \to \mathbb{I}{\mathbb{H}}, \qquad z \mapsto \bar{z} i z
$$
or, in coordinates, 
\begin{equation}\label{compute}
u = (z_0^2 + z_1^2 - z_2^2 - z_3^2) i + 2(z_1 z_2 - z_0 z_3) j + 2 (z_1 z_3 + z_0 z_2) k.
\end{equation}

In analogy with the previous section we consider the Hamiltonian function
$$
\mathcal{K}_\varepsilon (z,w,t,\tau) = \tau \vert z \vert^2 + \frac{\vert w \vert^2}{8}-1 + \varepsilon P(t,z,\varepsilon)
$$
where $P(t,z,\varepsilon) = \vert z \vert^2 U(t,\bar{z}iz,\varepsilon)$ and $U$ is meant as a function
$U: \mathbb{I}{\mathbb{H}} \to \mathbb{R}$. The symplectic manifold is 
$\mathbb{H} \times \mathbb{H} \times \mathbb{T} \times \mathbb{R}$ with the form
\begin{equation}\label{syf}
\omega = \sum_{h=0}^3 dz_h \wedge dw_h + dt \wedge d\tau.
\end{equation}
With the notation
$$
\nabla_z P = \partial_{z_0}P + i \, \partial_{z_i}P + j \, \partial_{z_2}P + k \, \partial_{z_3}P
$$
the Hamiltonian system is given again by \eqref{lc0} and \eqref{lc} but there are some differences with respect to the planar case. In contrast to Lemma \ref{l1}, in three dimensions not all closed orbits lying in $\mathcal{K}_\varepsilon^{-1}(0)$ will give rise to periodic solutions of \eqref{eqmain0}. We need a further condition.

\begin{lemma}\label{l1k}
For any $\varepsilon > 0$, any periodic solution $X(s)$ of \eqref{lc0}, lying on the energy level $\widetilde K_\varepsilon^{-1}(0)$ and such that
\begin{equation}\label{momentum}
\Re(\bar z(s) i w(s)) = 0, \quad \mbox{ for every } s,
\end{equation}
gives rise to a generalized $\eta T$-periodic solution of \eqref{eqmain0}, where $\eta$ is the index of $X(s)$. 
\end{lemma}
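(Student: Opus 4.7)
The plan is to mirror the proof of Lemma~\ref{l1}, replacing the Levi-Civita squaring by the KS map $u = \bar z\, i\, z$. Since $t'(s) = |z(s)|^2 \geq 0$ and the index is $\eta \geq 1$, the lift $t : \mathbb{R} \to \mathbb{R}$ is non-decreasing with $t(s+S) = t(s) + \eta T$; once we know that the zeros of $z$ are isolated (verified below from the energy relation), $t$ is strictly increasing and is therefore a global homeomorphism. Let $s(t)$ denote its inverse and set $u(t) := \overline{z(s(t))}\, i\, z(s(t))$; since $s(t + \eta T) = s(t) + S$, the function $u$ is $\eta T$-periodic. Condition (i) in Definition~\ref{def: bs} follows, as in the planar case, from the vanishing of $P$ at $z = 0$: evaluating $\mathcal{K}_\varepsilon(X(s_0)) = 0$ at a zero $s_0$ of $z$ gives $|w(s_0)|^2 = 8$, so $z'(s_0) = w(s_0)/4 \neq 0$ and $z$ has only simple zeros.

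For condition (iii), the collision direction is immediate from the identity $u/|u| = (\bar z/|z|)\, i\, (z/|z|)$ combined with the simplicity of zeros. The collision energy requires a direct computation of $\dot u$ via the chain rule,
$$
\dot u \;=\; \frac{1}{|z|^2}\frac{d u}{d s} \;=\; \frac{1}{4|z|^2}\bigl(\bar w\, i\, z + \bar z\, i\, w\bigr).
$$
Here the hypothesis~\eqref{momentum} enters decisively. Using $\overline{\bar w i z} = -\bar z i w$, one sees that~\eqref{momentum} is equivalent to $\bar w\, i\, z \in \mathbb{I}\mathbb{H}$, whence $\bar w\, i\, z + \bar z\, i\, w = 2\,\bar w\, i\, z$ and $|\dot u|^2 = |w|^2/(4|z|^2)$. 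Combined with $|u| = |z|^2$ and the relation $\mathcal{K}_\varepsilon(X(s)) = 0$ this yields
$$
\tfrac12 |\dot u|^2 - \frac{1}{|u|} \;=\; \frac{|w|^2/8 - 1}{|z|^2} \;=\; -\tau - \varepsilon\, U(t, u, \varepsilon),
$$
which plainly admits a continuous extension across collisions.

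Finally, condition (ii) amounts to verifying that $\ddot u = -u/|u|^3 + \varepsilon\, \nabla_u U(t,u,\varepsilon)$ on every interval where $z \neq 0$. I would differentiate the expression for $\dot u$ a second time, substitute the Hamiltonian equations $z' = w/4$ and $w' = -2\tau z - \varepsilon\, \nabla_z P$, and then translate $\nabla_z P$ back into $\nabla_u U$ by the chain rule applied to $P(t,z,\varepsilon) = |z|^2\, U(t, \bar z i z, \varepsilon)$, in strict analogy with the closing identities in the proof of Lemma~\ref{l1}. The main technical obstacle is the noncommutativity of $\mathbb{H}$: several quaternionic cross terms appear in the expansion of $\ddot u$ that have no counterpart in the planar calculation, and the role of the bilinear constraint~\eqref{momentum} is precisely to force their cancellation, so that only the Keplerian attraction $-u/|u|^3$ and the perturbation $\varepsilon\,\nabla_u U$ survive and $\ddot u$ lies in $\mathbb{I}\mathbb{H}$ as required.
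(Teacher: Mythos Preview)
Your proposal is correct and follows essentially the same route as the paper: define $u$ via the KS map, use $\mathcal{K}_\varepsilon=0$ at $z=0$ to get simplicity of zeros for (i), handle (iii) by the identity $u/|u|=(\bar z/|z|)\,i\,(z/|z|)$ and by reducing $\dot u$ to $\tfrac{\bar z i w}{2|z|^2}$ via \eqref{momentum}, and for (ii) differentiate once more and kill the quaternionic cross terms with the bilinear constraint. The paper carries out the last step explicitly, recording the chain-rule identity $\nabla_z(G\circ KS)=-2iz\,\nabla_u G$ and showing that the extra piece of $\ddot u$ collapses to $-\bar z\,\Re(i w\bar z)\,z'/|z|^6$, which vanishes by \eqref{momentum}; your sketch identifies exactly these ingredients without writing them out.
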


Before the proof we present an useful quaternionic formula. It can be obtained by combining the identity
\eqref{compute} with direct computations.

Assume that $F = F(z)$ and $G = G(u)$ are smooth functions with $F: \mathbb{H} \to \mathbb{R}$ and $G: \mathbb{I}\mathbb{H} \to \mathbb{R}$ and employ the notations
$$
\nabla_z F = \partial_{z_0}F + i \,\partial_{z_1}F + j \,\partial_{z_2}F + k \,\partial_{z_3}F
$$
$$
\nabla_u G = i \,\partial_{u_1}G + j \, \partial_{u_2}G + k\, \partial_{u_3}G.
$$
If we assume in addition that $F = G \circ KS$ then
\begin{equation}\label{uf}
\nabla_z F = - 2iz \nabla_u G.
\end{equation}

\begin{proof}
We define
$$
u(t) = KS(z(s(t)))
$$
where $s(t)$ is the inverse of $t(s)$. As in the planar case, we can prove that the condition (i) in the definition of generalized solution holds. Also the existence of limit of the collision direction in (iii) is proved in the same way. The existence of the limit of the collision energy follows from the identity
$$
\dot u = \frac{\bar z' i z + \bar z i z'}{\vert z \vert^2} \circ s =
\frac{2 \bar z i z'}{\vert z \vert^2} \circ s,
$$
where we have used \eqref{momentum}.
To prove (ii), we further compute
\begin{align*}
\ddot u & = 
 2 \left[\frac{(\bar z' i z' + \bar z i z'') \vert z \vert^2 - \bar z i z' (\bar z' z + \bar z z')}{\vert z \vert^6} \right] \circ s  \\
 & = 2 \left(\frac{\bar z i z'' \vert z \vert^2 - \bar z i z \vert z' \vert^2}{\vert z \vert^6}\right) \circ s + 
2 \left(\frac{\vert z \vert^2 \bar z' i z' - \bar z i z' \bar z z'}{\vert z \vert^6}\right) \circ s.
\end{align*}
Now, the first term can be computed using the differential equation and the zero-energy relationship, yielding
$$
2 \left(\frac{\bar z i z'' \vert z \vert^2 - \bar z i z \vert z' \vert^2}{\vert z \vert^6}\right) = - \frac{\bar z i z}{\vert z \vert^6} - 
\varepsilon\frac{\bar z i z P(t,z,\varepsilon) - \frac{\vert z \vert^2}{2}\bar z i \nabla_z P(t,z,\varepsilon)}{\vert z \vert^6}.
$$
In analogy with the planar case and taking into account \eqref{uf} we obtain
$$
\nabla_z P(t,z,\varepsilon) = 2 z U(t,\bar z i z,\varepsilon) - \vert z \vert^2 2 i z \nabla_u U(t,\bar z i z,\varepsilon)
$$
and
\begin{equation}\label{new11}
\vert z \vert^2 \bar z i \nabla_z P(t,z,\varepsilon) = 2 \bar z i z P(t,z,\varepsilon) + 2 \vert z \vert^6 \nabla_u U(t,\bar z i z,\varepsilon).
\end{equation}
On the other hand, the second term of $\ddot u$ can be rewritten as 
$$
2 \,\frac{\vert z \vert^2 \bar z' i z' - \bar z i z' \bar z z'}{\vert z \vert^6} = 
2 \,\frac{\bar z (z \bar z' i - i z' \bar z) z'}{\vert z \vert^6} = 
-\frac{\bar z \Re(i w \bar z) z'}{\vert z \vert^6}
$$
and we thus see that it vanishes in view of \eqref{momentum}.
Notice that $\Re (z \tilde z) = \Re (\tilde z z)$ for any quaternions $z$ and $\tilde z$.
\end{proof}

\begin{remark}
Assume now the $X(s)$ is a solution of the Hamiltonian system \eqref{lc0} that is not periodic but satisfies the conditions
$$
z(s+S) = \zeta(s) z(s), \quad w(s+S) = \zeta(s) w(s), \quad t(s+S) = t(s)+ \eta T, \quad \tau(s+S) = \tau(s), 
$$
where $\zeta: \mathbb{R} \to \mathbb{S}^1$ is any function. Here the unit circle has been embedded in $\mathbb{H}$ via the identification
$$
\mathbb{S}^1 = \{ z \in \mathbb{H} \, : \, z = z_0 + z_1 i, \, z_0^2 + z_1^2 = 1\}.
$$
Then the conclusion of the Lemma still holds and $u(t) = \overline{z(s(t))} i z(s(t))$ is $\eta T$-periodic.
\end{remark}

We observe that the function
$$
BL(X) = \Re (\bar z i w), \qquad X = (z,w,t,\tau),
$$
is a first integral of system \eqref{lc0}, as it can be easily verified directly, after having observed that, in view of \eqref{new11}, $\bar z i \nabla_z P(t,z,\ve)$ is a purely imaginary quaternion. For further convenience, we write below the explicit expression for $BL$, 
$$
BL(X) = -z_0 w_1 + z_1 w_0 - z_2 w_3 + z_3 w_2,
$$
from which one easily computes $\nabla BL(X) = (iw,-iz,0,0)$. Incidentally, notice also that $BL(X) = - \langle z, i w \rangle$.

In order to find solutions of \eqref{lc0} satisfying \eqref{momentum}, that is $BL(X(s)) = 0$ for every $s$, we will combine the result of Section \ref{sec2} with 
a procedure known as \emph{symplectic reduction}. Roughly speaking, we are going to prove that the Hamiltonian system
\eqref{lc0} naturally descends to an Hamiltonian system on an $8$-dimensional symplectic manifold $M_0$, obtained as the quotient of the level set $BL^{-1}(0)$ by a free circle action. The key point for this is to relate the first integral $BL$ with a suitable invariance of the Hamiltonian $\mathcal{K}_\ve$, via the concept of \emph{momentum map}.

We give below the details of this procedure, following the framework developed in the book \cite{CB}. We start with the group $G = \mathbb{S}^1$ and the action on the phase space $\mathbb{H} \times \mathbb{H} \times \mathbb{T} \times \mathbb{R}$,
$$
g \cdot X = (gz,gw,t,\tau)
$$
if $g \in \mathbb{S}^1$ and $X= (z,w,t,\tau) \in \mathbb{H} \times \mathbb{H} \times \mathbb{T} \times \mathbb{R}$. Notice that he Hamiltonian $\mathcal{K}_\varepsilon$ is invariant under this action because the numbers $g$ and $i$ commute. For points of the type $X = (0,0,t,\tau)$ the isotropy group is $\mathbb{S}^1$ and so the action is not free. For this reason we will work on the manifold
$$
M = \left( \mathbb{H}^2 \setminus \{0\}\right) \times \mathbb{T} \times \mathbb{R}
$$
where the action is free. It is also a proper action because the Lie group $G = \mathbb{S}^1$ is compact. The associated Lie algebra can be identified to the imaginary axis, $\mathfrak{g} = i \mathbb{R}$, so that the exponential map $\textnormal{exp}: \mathfrak{g} \to G$ coincides with the complex exponential. Given $\xi \in \mathfrak{g}$, the infinitesimal generator of the action in the direction of $\xi$ is given by 
$$
\mathfrak{X}^\xi(X) = \frac{d}{ds} \left[ e^{s\xi} \cdot (z,w,t,\tau)\right]_{\mid s = 0} = (\xi z, \xi w,0,0)
$$
for each $X= (z,w,t,\tau) \in M$. This is a Hamiltonian vector field with $\mathfrak{X}^\xi = J \nabla \Phi^\xi$ where
$$
\Phi^\xi: M \to \mathbb{R}, \qquad \Phi^\xi = - \Im(\xi) BL.
$$ 
After the identification of the dual $\mathfrak{g}^*$ with $\mathbb{R}$ we can say that $BL$ is the momentum map of this Hamiltonian action.

From now on $\Phi = BL$. In particular the domain of $BL$ is $M$ and
the real number $\mu = 0$ is a regular value. To apply the theorem of regular reduction in Chapter VII of \cite{CB}, we finally need to check the coadjoint equivariance of $\Phi$. In our case, the coadjoint action of $G = \mathbb{S}^1$ on $\mathfrak{g}^*$ is trivial because the group is commutative. In consequence the coadjoint equivariance of $\Phi$ just means
$$
BL(g \cdot X) = BL(X)
$$
for each $g \in \mathbb{S}^1$, $X \in M$. This property holds because $i$ and $g$ commute. 

Then, by the theorem of symplectic reduction the reduced space
$$
M_0 = \,\faktor{BL^{-1}(0)}{\mathbb{S}^1}
$$
is a symplectic manifold of dimension $8$. Points in $M_0$ are equivalence classes of the type
$$
\overline{X} = \overline{(z,w,t,\tau)} = \{ (gz,gw,t,\tau) \, : \, g \in \mathbb{S}^1\}
$$
where $X = (z,w,t,\tau) \in M$ with $\Re (\bar z i w) = 0$. To describe the tangent space of $M_0$ at a point $\overline{X}$
we recall that $(BL^{-1}(0),\mathbb{S}^1,M_0)$ is a principal $\mathbb{S}^1$-bundle over $M_0$ (see the definition in page 
315 of \cite{CB}). In particular the canonical projection $\pi_0: BL^{-1}(0) \to M_0$ is a surjective submersion and this implies that 
$$
T_{\overline{X}}(M_0) = (d\pi_0)_X \,T_X(BL^{-1}(0)) \cong  \,\faktor{T_X(BL^{-1}(0))}{\textnormal{ker}(d\pi_0)_X}\,.
$$
The tangent space of $BL^{-1}(0)$ at $X$ can be described as
$$
T_X(BL^{-1}(0)) = \{ Y = (Y_1,Y_2,Y_3,Y_4) \in \mathbb{H} \times \mathbb{H} \times \mathbb{R} \times \mathbb{R} \, : \, \langle \nabla BL(X),Y \rangle = 0 \}
$$
where $\langle \cdot, \cdot \rangle$ denotes the inner product in $\mathbb{H}^2 \times \mathbb{R}^2$ when it is identified to $\mathbb{R}^{10}$.
The space $\textnormal{ker}(d\pi_0)_X$ is one-dimensional and spanned by the vector $J \nabla BL(X) = (iz,iw,0,0)$. Vectors in $T_{\overline{X}}(M_0)$ will be described as equivalence classes in the quotient space,
$$
\overline{Y} = Y + \textnormal{ker}(d\pi_0)_X \,.
$$
The symplectic form $\omega_0$ in $M_0$ is obtained from the identity 
$$
\pi_0^{*} \,\omega_0 = i^* \omega
$$
where $i: BL^{-1}(0) \to M$ is the inclusion map. Note that $i^* \omega$ can be thought as the restriction of the symplectic form \eqref{syf} to the sub-manifold $BL^{-1}(0)$.

The reduced Hamiltonian is defined by
$$
\overline{\mathcal{K}}_\varepsilon: M_0 \to \mathbb{R}, \qquad \overline{\mathcal{K}}_\varepsilon(\overline{X}) = \mathcal{K}_\varepsilon(X)
$$
and the associated Hamiltonian vector field will be denoted by $\overline{\chi}_\varepsilon$. We are going to apply the result in Section \ref{sec2} on the energy level $\overline{\mathcal{K}}_\varepsilon^{-1}(0)$ of the system
\begin{equation}\label{wtrue}
\dot{\overline{X}} = \overline{\chi}_\varepsilon(\overline{X}), \qquad \overline{X} \in M_0.
\end{equation}
The orbits of this sytem can be lifted to orbits of the initial Hamiltonian system \eqref{lc0} lying on $BL^{-1}(0)$. This is a consequence of the identity
\begin{equation}\label{page17}
(d \pi_0)_{X} \, \chi_\varepsilon(X) = \overline{\chi}_\varepsilon(\pi_0(X)), \qquad X \in BL^{-1}(0),
\end{equation}
where $\chi_\varepsilon$ is the restriction of the vector field $J \nabla \mathcal{K}_\varepsilon$ to the submanifold $BL^{-1}(0)$. In principle the lift of a periodic solution of \eqref{wtrue} is not necessarily periodic but this should not create any trouble in view of the remark after Lemma \ref{l1k}.

To be in the framework of Section \ref{sec2}, we now claim that the form $\omega_0$ is exact on $M_0$. To prove this we first observe that the form given by \eqref{syf} is exact on $M$ with primitive
$$
\alpha = \sum_{h=0}^3 z_h dw_h - \tau dt.
$$
This $1$-form is invariant under the action, meaning that
$$
\alpha_{g \cdot X} (g \cdot Y) = \alpha_X (Y)
$$
for each $X = (z,w,t,\tau) \in M$ and $Y = (Y_1,Y_2,Y_3,Y_4) \in T_X(M) \equiv \mathbb{H}^2 \times \mathbb{R}^2$. To justify this identity we note that the inner product in $\mathbb{H}$ is invariant under the action of rotations and therefore
$$
\alpha_{g \cdot X} (g \cdot Y) =  \langle gz,g Y_2 \rangle - \tau Y_3 = \langle z,Y_2 \rangle - \tau Y_3 = 
\alpha_X (Y).
$$
The invariance of $\alpha$ is inherited by $i^* \alpha$, allowing to define a $1$-form $\alpha_0$ in $M_0$ via the identity
\begin{equation}\label{pull1}
\pi_0^{*} \,\alpha_0 = i^* \alpha
\end{equation}
or, equivalently, $(\alpha_0)_{\overline{X}}(\overline{Y}) = \alpha_X(Y)$. Taking differential in \eqref{pull1} we obtain
$$
\pi_0^*(d\alpha_0) = i^* (d\alpha) = i^*\omega = \pi_0^*\omega_0.
$$
Since $\pi_0$ is a submersion the identity $\pi_0^*(d\alpha_0-\omega_0) = 0$ implies that $d\alpha_0 - \omega_0 = 0$, proving that $\omega_0$ is exact.

As a second step we must also check that $E = 0$ is a regular value of $\overline{\mathcal{K}}_0$.
Since $\mathcal{K}_0 \vert_{BL^{-1}(0)} = \overline{\mathcal{K}}_0 \circ \pi_0$, we can use again that $\pi_0$ is a submersion and reduce the question to prove that $0$ is a regular value of the restriction of $\mathcal{K}_0$ to $BL^{-1}(0)$.
This is a consequence of the linear independence (over $\mathbb{R}$) of the vectors $\nabla BL(X)$ and $\nabla \mathcal{K}_0(X)$ when $\mathcal{K}_0(X) = 0$. Indeed if we consider the vectors in $\mathbb{H}^2$,
$$
\beta = (iw,-iz), \qquad k = (2\tau z, \frac14 w)
$$
with $\tau \vert z \vert^2 + \frac18 \vert w \vert^2 = 1$ then at least one of the four numbers $\tau z_0^2 + \frac18 w_1^2$,
$\tau z_1^2 + \frac18 w_0^2$, $\tau z_3^2 + \frac18 w_3^2$, $\tau z_3^2 + \frac18 w_2^2$ will not vanish. Then some of the $2 \times 2$ minors of the matrix with rows $\beta$ and $k$ will not vanish. This is a sub-matrix of the matrix with rows $\nabla BL(X)$ and $\nabla \mathcal{K}_0(X)$, which has rank two.

To construct the periodic manifold we first set
$$
\Lambda_k = \left\{ X = (z,w,t,\tau) \in \mathcal{K}_0^{-1}(0) \cap BL^{-1}(0) \, : \, \tau = \tau_k \right\},
$$
with $\tau_k$ given by \eqref{deftk}, and the very same computation therein shows that $\Lambda_k$ is filled by closed orbits
of the system in $BL^{-1}(0)$, $\dot X = \chi_\ve(X)$. As a consequence,
$$
\overline{\Lambda}_k = \pi_0 ( \Lambda_k ) \subset \overline{\mathcal{K}}_0^{-1}(0) \subset M_0
$$ 
is composed by periodic solutions of the system \eqref{wtrue}. Moreover these solutions are not constant because the third component $t(s)$ satisfies $t(s+S_k) = t(s) + T$ where $S_k$ is the quantity defined in Section \ref{sec321}. Clearly the sets $\Lambda_k$ and $\overline{\Lambda}_k$ are compact and we claim that they have a structure of smooth manifold with respective dimensions $7$ and $6$. Actually the function $f: BL^{-1}(0) \to \mathbb{R}^2$, $f(X) = (\mathcal{K}_0(X),\tau-\tau_k)$ is such that $\Lambda_k = f^{-1}(0)$. Similarly $\overline{\Lambda}_k= \overline{f}^{-1}(0)$ with $\overline{f} = f \circ \pi_0$. The independence of the vectors $\beta$ and $k$ discussed above can be employed to prove that $0$ is a regular value for both functions $f$ and $\overline{f}$.

The set
$$
\Sigma_k = \overline{\Lambda}_k \times \{S_k\}
$$
satisfies the conditions (i) and (ii) of Section \ref{sec2} and $\Sigma_k$ is a periodic manifold. Let us now prove that the non-degeneracy condition (iii) also holds. The phase space $M_0$ has dimension $8$ and the periodic manifold $\Sigma_k$ has dimension $6 = 8 - 2$. The same argument employed in the two-dimensional case leads us to reformulate (iii) as
\smallbreak
\noindent
\textit{First non-degeneracy condition:} for each $\overline{X} \in \overline{\Lambda}_k$ there exists a vector $\overline{Y}_\star \in T_{\overline{X}}(\overline{\mathcal{K}}_0^{-1}(0))$ such that
$(\textnormal{Id}-\overline{P})\overline{Y}_\star$ is not collinear with $\overline{\chi}_0(\overline{X})$.

\noindent
Here $\overline{P}: T_{\overline{X}}(M_0) \to T_{\overline{X}}(M_0)$ denotes the corresponding monodromy operator associated to 
$\dot{\overline{X}} = \overline{\chi}_0(\overline{X})$.
\smallbreak
To work on quotient spaces is more delicate and for this reason we reformulate the above condition in terms of the original flow,
\smallbreak
\noindent
\textit{Second non-degeneracy condition:} for each $X \in \Lambda_k$ there exists a vector $Y_\star \in T_X(BL^{-1}(0)) \cap T_X(\mathcal{K}_0^{-1}(0))$ such that $(\textnormal{Id}-P)Y_\star$ is not in the two-dimensional space spanned by
$J \nabla BL(X)$ and $J \nabla \mathcal{K}_0(X)$; that is,
$$
(\textnormal{Id }-P)Y_\star \notin \textnormal{Sp}\{J \nabla BL(X), J \nabla \mathcal{K}_0(X) \}.
$$
Here $P: T_X(BL^{-1}(0)) \to T_X(BL^{-1}(0))$ is the monodromy operator associated to the system $\dot X = \chi_0(X)$. Note that the identity \eqref{page17} implies that
\begin{equation}\label{l17}
\overline{P} \circ (d\pi_0)_X = (d\pi_0)_X \circ P \quad \mbox{ for each } X \in \Lambda_k.
\end{equation}
We also observe that
\begin{equation}\label{ker}
(d\pi_0)_X \left(\textnormal{ker}(d\mathcal{K}_0) \cap T_X(BL^{-1}(0))\right) = \textnormal{ker}(d\overline{\mathcal{K}}_0)_{\overline{X}}.
\end{equation}
This is a consequence of chain rule and the surjective character of $(d\pi_0)_X$.
\smallbreak
For our purposes it is sufficient to check that the second non-degeneracy condition implies the first. Assume that $Y_\star$ is given by the second condition. Then $\overline{Y}_\star = (d\pi_0)_X Y_\star$ belongs to $\textnormal{ker}(d\overline{\mathcal{K}}_0)_X = T_{\overline{X}}(\overline{\mathcal{K}}_0^{-1}(0))$. This is a consequence of \eqref{ker}. Assume now by a contradiction argument that $(\textnormal{Id}-\overline{P})\overline{Y}_\star$ were collinear with $\overline{\chi}_0(\overline{X}) = (d\pi_0)_X \chi_0(X)$. Since $\chi_0(X)$ is the restriction of the vector field $J \nabla \mathcal{K}_0$, there should exist some $\lambda \in \mathbb{R}$ such that $(\textnormal{Id}-\overline{P})\overline{Y}_\star =
\lambda (d\pi_0)_X(J\nabla \mathcal{K}_0(X))$. From \eqref{l17} we deduce that $(d\pi_0)_X (\textnormal{Id }-P)Y_\star = 
(d\pi_0)_X(\lambda J \nabla \mathcal{K}_0(X))$. The space $\textnormal{ker}(d\pi_0)_X$ is spanned by $J\nabla BL(X)$ so that there should exist $\mu \in \mathbb{R}$ such that 
$$
(\textnormal{Id}-\overline{P})\overline{Y}_\star =
\lambda J \nabla \mathcal{K}_0(X) + \mu J \nabla BL(X).
$$
This would imply that the second condition fails. In consequence the first condition is valid whenever the second holds.

We are going to prove that $\Sigma_k$ is non-degenerate using the second condition and taking advantage of the result in the
2d-case. Let us fix $X_0 \in \Lambda_k$. The crucial observation is that
$$
\Re(\bar z_0 i w_0) = 0
$$
because $X_0 = (z_0,w_0,t,\tau) \in BL^{-1}(0)$. This implies the existence of a Levi-Civita plane $\Pi \subset \mathbb{H}$ with $z_0,w_0 \in \Pi$. We recall that a Levi-Civita plane $\Pi \subset \mathbb{H}$ is a two dimensional linear space spanned by vectors $v_1,v_2 \in \mathbb{H}$ with $\Re(\bar v_1 i v_2) = 0$. Given such a plane, the set
$$
E_\Pi = \Pi \times \Pi \times \mathbb{R}^2
$$
is invariant under the flow of $J\dot X = \nabla \mathcal{K}_0(X)$. Moreover there exists an isomorphism of vector spaces $\Pi \cong \mathcal{C}$ such that the flow on $\mathcal{K}_0^{-1}(0)$ becomes the Levi-Civita regularized flow in dimension $2$. That is, system \eqref{lc0} with $\ve = 0$. See \cite{Zha15} for more details. As a consequence, the six-dimensional space $E_\Pi$ is invariant under the monodromy operator and, up to a linear conjugacy, the restriction of $P$ to $E_\Pi$ is nothing but
the monodromy map for the 2d-case. Hence, by the arguments in the 2d-case, there exists $Y_\star \in E_\Pi \cap T_X(\mathcal{K}_0^{-1}(0))$ such that $(\textnormal{Id}-P)Y_\star$ is not collinear with $J\nabla \mathcal{K}_0(X_0)$. 
To conclude, we thus need to prove that $Y_\star \in T_X(BL^{-1}(0))$ and that 
$(\textnormal{Id}-P)Y_\star \notin \textnormal{Sp}\{ J \nabla \mathcal{K}_0(X_0),J \nabla BL(X_0)\}$. As for the first fact, we write 
$Y_\star = (Y_1,Y_2,Y_3,Y_4)$ with $Y_1,Y_2 \in \Pi$ and we simply compute
$$
\langle \nabla BL(X_0), Y_\star \rangle = - \langle i w_0,Y_1 \rangle + \langle i z_0,Y_2 \rangle = 
\Re(\overline{w}_0 i Y_1) - \Re(\overline{z}_0 i Y_2)
$$
which vanishes since $w_0,z_0,Y_1,Y_2$ lies in the Levi-Civita plane $\Pi$. A very similar computation shows that
$J\nabla BL(X_0)$ is orthogonal to $E_\Pi$, thus implying also the second fact.

The rest of the proof has no substantial differences with the 2d-case. Now we apply Lemma \ref{l1k}.

\section{Miscellaneous remarks}\label{sec6}

\subsection{Bifurcation from infinity}

We say that the equation \eqref{eqmain0} has a periodic bifurcation from infinity if there exists $\ve_* > 0$ such that for $0 < \ve < \ve_*$ there exists a $T$-periodic solution $u_\ve(t)$ with
$$
\min_t \vert u_\ve(t) \vert \to \infty \quad \mbox{ as } \ve \to 0^+.
$$
Note that these solutions do not have collisions for small $\ve$.

In contrast to the bifurcation from periodic manifolds this type of bifurcation does not appear for all perturbations. We illustrate this phenomenon on the special class of equations
\begin{equation}\label{fkp}
\ddot u = - \frac{u}{\vert u \vert^3} + \ve \,p(t)
\end{equation}
where $p: \mathbb{R} \to \mathbb{R}^N$ is a $\mathcal{C}^\infty$ and $T$-periodic function and $N \geq 2$.

\begin{proposition}\label{aver}
The equation \eqref{fkp} has a periodic bifurcation from infinity if and only if
\begin{equation}\label{nonav}
\int_0^T p(t)\,dt \neq 0.
\end{equation}
\end{proposition}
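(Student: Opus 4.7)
The proof would naturally split into the two implications: the necessary direction is a direct integration-and-contradiction argument, while the sufficient direction relies on a Lyapunov--Schmidt reduction made effective by a blow-up rescaling that desingularises the unperturbed problem.

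\emph{Necessity.} Assume, aiming at a contradiction, that $\int_0^T p(t)\,dt = 0$ and that a bifurcating family $u_\varepsilon$ of $T$-periodic solutions with $\min_t |u_\varepsilon(t)| \to \infty$ exists. First I would integrate \eqref{fkp} over $[0,T]$: $T$-periodicity of $\dot u_\varepsilon$ kills the left-hand side, yielding
$$
\int_0^T \frac{u_\varepsilon(t)}{|u_\varepsilon(t)|^3}\,dt = \varepsilon \int_0^T p(t)\,dt = 0.
$$
Decompose $u_\varepsilon = c_\varepsilon + y_\varepsilon$ into its time-mean $c_\varepsilon = T^{-1}\int_0^T u_\varepsilon$ and zero-mean fluctuation $y_\varepsilon$. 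Since both $y_\varepsilon$ and $\dot y_\varepsilon$ have vanishing mean over a period, Poincar\'e--Wirtinger (applied twice) together with Sobolev embedding give
$$
\|y_\varepsilon\|_\infty \le C\|\ddot y_\varepsilon\|_\infty \le C\left(\frac{1}{\min_t|u_\varepsilon|^{2}} + \varepsilon\|p\|_\infty\right)\longrightarrow 0,
$$
which in turn forces $|c_\varepsilon| \ge \min_t|u_\varepsilon| - \|y_\varepsilon\|_\infty \to \infty$. For $\varepsilon$ small, $\|y_\varepsilon\|_\infty \le |c_\varepsilon|/2$, and the elementary pointwise bound $\bigl|(c{+}y)/|c{+}y|^3 - c/|c|^3\bigr| \le C|y|/|c|^3$ combined with the integral identity above gives
$$
\frac{T}{|c_\varepsilon|^2} = \left|T\frac{c_\varepsilon}{|c_\varepsilon|^3}\right| \le C\frac{\|y_\varepsilon\|_\infty}{|c_\varepsilon|^3},
$$
i.e.\ $|c_\varepsilon| \le C\|y_\varepsilon\|_\infty \to 0$, which contradicts $|c_\varepsilon| \to \infty$.

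\emph{Sufficiency.} Set $q = T^{-1}\int_0^T p \ne 0$ and perform Lyapunov--Schmidt by the same splitting $u = c + y$. The equation decomposes into the bifurcation equation
$$
\frac{1}{T}\int_0^T \frac{c+y(t)}{|c+y(t)|^3}\,dt = \varepsilon q
$$
and an auxiliary zero-mean equation for $y$. For $|c|$ large I would solve the latter by contraction in the Banach space of zero-mean $T$-periodic $C^0$ functions, exploiting the invertibility of $d^2/dt^2$ on that space together with the bounds $\|(c{+}y)/|c{+}y|^3\|_\infty = O(|c|^{-2})$ and Lipschitz constant $O(|c|^{-3})$ for the Keplerian nonlinearity; this produces a smooth branch $y = y(c,\varepsilon)$ with $\|y(c,\varepsilon)\|_\infty = O(\varepsilon + |c|^{-2})$. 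Substituting into the bifurcation equation gives
$$
\frac{c}{|c|^3} = \varepsilon q + O\bigl(\varepsilon|c|^{-2} + |c|^{-4}\bigr).
$$

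\emph{The main obstacle} is that the unperturbed equation $c/|c|^3 = 0$ admits no bounded solution, so a direct implicit function theorem at $\varepsilon = 0$ fails. The remedy is the rescaling $c = \tilde c/\sqrt{\varepsilon |q|}$, which transforms the leading bifurcation equation into $\tilde c/|\tilde c|^3 = \hat q$ with $\hat q = q/|q|$, and has the unique solution $\tilde c_* = \hat q/\sqrt{|q|}$. At $\tilde c_*$ the Jacobian of $\tilde c \mapsto \tilde c/|\tilde c|^3$ has eigenvalue $-2|q|^{3/2}$ along $\hat q$ and $|q|^{3/2}$ (with multiplicity $N-1$) on $\hat q^\perp$, hence is invertible. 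The rescaled remainder is $o(1)$ as $\varepsilon\to 0^+$ (in fact $O(\varepsilon^{3/2})$), so the implicit function theorem yields $\tilde c_\varepsilon \to \tilde c_*$, and returning to the original variable one obtains $c_\varepsilon = \tilde c_\varepsilon/\sqrt{\varepsilon|q|}$ with $|c_\varepsilon| \sim (\varepsilon|q|)^{-1/2}\to \infty$. The function $u_\varepsilon = c_\varepsilon + y(c_\varepsilon,\varepsilon)$ is then the desired bifurcating branch, and $\min_t|u_\varepsilon| \sim |c_\varepsilon| \to \infty$.
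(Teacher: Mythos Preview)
Your proposal is correct and reaches the same conclusion as the paper, but the two directions are each argued by a different mechanism than the paper uses.

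For sufficiency, the paper performs the same rescaling you do (writing $x = \varepsilon^{1/2}u$, equivalently your $c = \tilde c/\sqrt{\varepsilon|q|}$) but then invokes the averaging theorem for the slow system $\dot x = \varepsilon^{3/4}y$, $\dot y = \varepsilon^{3/4}(-x/|x|^3 + p(t))$, citing Hale; the averaged vector field has the unique zero $(x^*,0)$ with $x^*/|x^*|^3 = \bar p$, and non-degeneracy is checked via the same Jacobian you compute. Your Lyapunov--Schmidt reduction is essentially an unpacking of that averaging argument done by hand: it is longer but entirely self-contained and makes the role of the contraction in the zero-mean component explicit. (One small slip: with your rescaling the limiting equation $\tilde c/|\tilde c|^3 = \hat q$ has solution $\tilde c_* = \hat q$, not $\hat q/\sqrt{|q|}$; the Jacobian eigenvalues at $\tilde c_*$ are then $-2$ and $1$, still invertible, so nothing is affected.)

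For necessity the two arguments are genuinely different in flavour. The paper first bounds $\|\ddot u_n\|_\infty$ to get $|u_n(t)-u_n(0)|\le R$, extracts a limiting direction $\eta$ of $u_n(0)/|u_n(0)|$, and uses a geometric cone argument to show that $u_n(t)/|u_n(t)|^3$ eventually stays in a fixed half-space, hence its integral cannot vanish. Your mean/fluctuation decomposition with Poincar\'e--Wirtinger is more quantitative and arguably cleaner: it directly produces the contradiction $|c_\varepsilon| \le C\|y_\varepsilon\|_\infty \to 0$ versus $|c_\varepsilon|\to\infty$, without needing to pass to subsequences or introduce cones.
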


\begin{proof}
Assume first that \eqref{nonav} holds. Then we perform the change of variables
$$
x = \varepsilon^{1/2} u
$$	
so as to obtain the equation
$$
\ddot x = \varepsilon^{3/2} \left( - \frac{x}{\vert x \vert^3} + p(t) \right).
$$
Writing this equation as the first order system
$$
\dot x = \varepsilon^{3/4} y, \qquad \dot y = \varepsilon^{3/4} \left( -\frac{x}{\vert x \vert^3} + p(t)\right)
$$
we see that the averaging method (see for instance Theorem 6.4 in \cite{Hale}) applies, giving a $T$-periodic solution $(x,y)$ bifurcating from
$(x^*,0)$, where $x^*$ is the unique solution of
$$
\frac{x^*}{\vert x^* \vert^3} = \frac{1}{T}\int_0^T p(t)\,dt.
$$
Since $u = \varepsilon^{-1/2}x$, the corresponding $T$-periodic solution of \eqref{fkp} bifurcates from infinity when $\varepsilon \to 0^+$.

We give some details on the previous application of the averaging method. First consider the map
$$
\sigma: (\mathbb{R}^N \setminus \{0\}) \times \mathbb{R}^N \to 
\mathbb{R}^N \times \mathbb{R}^N, \qquad \sigma(x,y) = \left( y, - \frac{x}{\vert x \vert^3} + \bar{p} \right)
$$
with $\bar{p} = \tfrac{1}{T} \int_0^T p(t)\,dt$. For $\bar{p} \neq 0$ this map has the unique zero $(x^*,0)$ with $x^* = \frac{1}{\vert \bar{p} \vert^{3/2}} \bar{p}$ and we must check that this zero is non-degenerate; that is, $\textnormal{det} [ \sigma'(x,y)(x^*,0) ] \neq 0$. The Jacobian matrix is easily computed, 
$$
\sigma'(x,y) = \left( \begin{array}{cc}
0 & \textnormal{Id}_N \\
S_N & 0 
\end{array} \right)
$$
where $S_N = \vert x \vert^{-5} (- \vert x \vert^2 \textnormal{Id}_N + 3 x \otimes x)$ and $x \otimes x$ is the $N \times N$ matrix whose components are $(x \otimes x)_{ij} = x_i x_j$. The formula 
$\textnormal{det} (a \textnormal{Id}_N + b z \otimes z) = a^{N-1} (a + b \vert z \vert^2)$ can be applied to obtain 
$\textnormal{det} [ \sigma'(x,y) ] = 2 \vert x \vert^{-3N} \neq 0$.

To prove the converse assume the existence of a sequence $\ve_n \searrow 0$ such that the system \eqref{fkp} has a $T$-periodic solution $u_n(t)$ for $\ve = \ve_n$ with $\min_t \vert u_n(t) \vert \to \infty$. It is not restrictive to assume $\ve_n < 1$ and $\vert u_n(t) \vert \geq 1$ for each $t \in \mathbb{R}$ and $n \geq 1$. From the equation \eqref{fkp} we deduce that $\Vert \ddot u_n \Vert_\infty$ is bounded, namely $\vert \ddot u_n(t) \vert \leq 1 + \Vert p \Vert_\infty$ everywhere. Here $\Vert \cdot \Vert_\infty$ denotes the usual norm in $L^{\infty}(\mathbb{R})$. The periodicity of $u_n(t)$ and this bound implies that
\begin{equation}\label{Rb}
\vert u_n(t) - u_n(0) \vert \leq R, \qquad t \in \mathbb{R}, n \geq 1,
\end{equation}
where $R$ is a constant which only depends upon $\Vert p \Vert_\infty$ and $T$. After extracting a subsequence we assume that the sequence of unit vectors $\frac{u_n(0)}{\vert u_n(0) \vert}$ converges to some vector $\eta \in \mathbb{R}^N$ with $\vert \eta \vert = 1$. Associated to this vector we consider the two cones
$$
C_i = \left\{ x \in \mathbb{R}^N \, : \, \langle x, \eta \rangle \geq \alpha_i \vert x \vert \right\}, \qquad i = 1,2
$$
where $0 < \alpha_1 < \alpha_2 < 1$. We observe that $C_2$ is contained in $C_1$. Moreover there exists $\mu > 0$ such that if $\vert x \vert \geq \mu$ and $x \in C_2$ then the ball of center $x$ and radius $R$ is contained in $C_1$. Here $R$ is the constant given in \eqref{Rb}.

For large $n$ the vector $\frac{u_n(0)}{\vert u_n(0) \vert}$ must enter into $C_2$ and the same can be said about $u_n(0)$. Assuming that $\vert u_n(0) \vert \geq \mu$ we deduce from \eqref{Rb} that $u_n(t)$ belongs to $C_1$. Hence for $n$ large enough,
$$
\frac{u_n(t)}{\vert u_n(t) \vert^3} \in C_1 \quad \mbox{ for every } t \in \mathbb{R}.
$$
This implies that $\int_0^T \frac{u_n(t)}{\vert u_n(t) \vert^3} \,dt \neq 0$. After integrating the equation \eqref{fkp} over a period we deduce that the condition \eqref{nonav} holds.
\end{proof}

\subsection{Removal of collisions}

Given a perturbed Kepler problem and a periodic solution with collisions, it seems reasonable to expect that collisions will disappear by slight changes in the perturbation. We present a very preliminary result in this direction. It is concerned with the problem
\begin{equation}\label{for}
\ddot u = - \frac{u}{\vert u \vert^3} + p(t)
\end{equation}
in dimension $N = 2$.

\begin{proposition}\label{Acol}
Assume that $p \in \mathcal{C}^\infty \left( \mathbb{R}/T\mathbb{Z},\mathbb{R}^2\right)$ is such that \eqref{for} has a $T$-periodic solution $u(t)$ with collisions. Then there exists sequences $p_n: \mathbb{R} \to \mathbb{R}^2$, $T_n > 0$, $u_n(t)$ solutions of \eqref{for} for $p(t) = p_n(t)$ such that
\begin{itemize}
\item[i)] $p_n$ is $\mathcal{C}^\infty$ and $T_n$-periodic with $T_n \to T$
\item[ii)] $u_n(t)$ is $T_n$-periodic, $u_n(t) \to u(t)$ uniformly on compact intervals, $u_n(t) \neq 0$ for each $t$
\item[iii)] $\int_I \vert p_n(t) - p(t) \vert \,dt \to 0$ as $n \to \infty$, for each bounded interval $I \subset \mathbb{R}$.
\end{itemize}
\end{proposition}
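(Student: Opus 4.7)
\medskip

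\noindent
\emph{Plan.} I would proceed via Levi--Civita regularization, exactly as in Section \ref{sec41}. Since $u$ is a generalized $T$-periodic solution of \eqref{for} (which is the case $\varepsilon=1$, $U(t,u)=p(t)\cdot u$ of \eqref{eqmain0}), the converse procedure in Section \ref{sec41} lifts $u$ to a closed orbit
$$
X(s)=(z(s),w(s),t(s),\tau(s))
$$
of the LC system \eqref{lc0} lying on the energy level $\mathcal{K}_1^{-1}(0)$, with $X(s+S)=\pm X(s)$, $t(s+S)=t(s)+T$ and $S=\int_0^T|u(t)|^{-1}dt$. The collisions of $u$ correspond to finitely many simple zeros $s_1,\dots,s_k$ of $z(\cdot)$ in a fundamental period, where $|w(s_j)|=\sqrt{2}$ by the zero-energy relation.

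The guiding idea is that, in LC phase space, a collision is no longer singular: $X$ is a smooth closed curve and the collision locus $\{z=0\}$ has real codimension $2$, so a small generic deformation of $X$ in phase space will miss it. The plan is therefore to perturb the forcing $p$ to $p_n=p+\lambda_n q_n$ (with $\lambda_n\to 0^+$ and $q_n$ smooth and $T$-periodic), and to produce, for each $n$, a closed orbit $X_n$ of the LC system with forcing $p_n$, close to $X$, with $t$-period $T_n$ close to $T$, and crucially with $z_n(s)\neq 0$ for every $s$. Projecting back via $u_n(t)=z_n(s_n(t))^2$ (where $s_n$ is the inverse of $t_n$) and invoking Lemma \ref{l1} (with the parameters adapted to this setting) would give the desired $T_n$-periodic non-colliding solutions.

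The key analytic step is the existence of $X_n$ with $z_n\neq 0$. I would choose $q_n$ supported in a small $t$-neighbourhood of the collision times $t_j=t(s_j)$ and directed perpendicularly to the collision direction $\lim_{t\to t_j}u(t)/|u(t)|$. Heuristically, in the original variables this adds a small transverse impulse at the collision, producing a tight elliptic arc in place of the rectilinear bounce; in LC variables this corresponds to shifting the orbit in the direction $i\,z'(s_j)^{\perp}$, which moves $z$ away from $0$ near each $s_j$. To produce the actual closed orbit one can set up a shooting map
$$
\mathcal{F}(\xi,S',\lambda)=\phi^{p+\lambda q_n}_{S'}(\xi)-\xi,
$$
restricted to the zero-energy surface; the pair $(S,X(0))$ is a zero at $\lambda=0$, and the extra scalar parameter $S'$ (i.e.\ the freedom to let the $t$-period $T_n$ drift) compensates one unit of Floquet degeneracy. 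The remaining solvability is reduced to a finite-dimensional bifurcation equation on $\ker(\mathrm{Id}-P_W)$, which is where the transverse direction of $q_n$ enters to guarantee that the branch of solutions comes out in the non-collision region $\{z\neq 0\}$. The verification of (i)--(iii) is then straightforward: (i) and the smallness in (iii) follow from $p_n-p=\lambda_n q_n$ with $\lambda_n\to 0$ and $q_n$ uniformly bounded on a fixed interval; (ii) follows from continuous dependence of the LC flow on parameters, together with the relation $T_n=\int_0^{S_n}|z_n(s)|^2\,ds\to T$ and $u_n=z_n^2\circ s_n\to z^2\circ s=u$ uniformly on compacts.

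The main obstacle, and the reason the authors describe this as ``very preliminary'', is exactly this existence step. The closed orbit $X$ is highly degenerate in the Floquet sense (by Remark \ref{non-deg} one expects all multipliers to equal $1$), so a direct implicit function theorem fails, and one must exploit both the freedom in choosing the perturbation direction $q_n$ and the freedom to shift the period $T\to T_n$; showing that these two extra parameters suffice to smoothly continue the collision orbit into a non-collision orbit is the delicate point of the proof. Once continuation is established, the transversality of $q_n$ to the collision locus ensures that the continued orbit genuinely enters $\{z\neq 0\}$.
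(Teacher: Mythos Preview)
Your plan has a genuine strategic gap: you fix the perturbation $p_n=p+\lambda_n q_n$ \emph{first} and then attempt to continue the closed orbit $X$ into a nearby closed orbit $X_n$ for the perturbed system. As you yourself note, this continuation step is obstructed by the Floquet degeneracy of $X$, and the proposal does not actually carry it out --- you only sketch a hope that the two extra parameters (the direction of $q_n$ and the period shift $T\to T_n$) will suffice. This is not established, and it is exactly the kind of degenerate continuation that is hard.

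The paper sidesteps this obstacle entirely by reversing the order of construction. Instead of perturbing $p$ and then solving for $u_n$, it perturbs the regularized curve $z(s)$ directly, setting
\[
z_\mu(s)=z(s)+\mu^3\,\Psi\!\left(\tfrac{s}{\mu}\right)v
\]
with $v\perp z'(0)$ and $\Psi$ a smooth bump, so that $z_\mu$ is $S$-periodic and (after an easy estimate) has no zeros. One then \emph{defines} $T_\mu=\int_0^S|z_\mu|^2\,ds$, $t_\mu(s)=\int_0^s|z_\mu|^2$, $u_\mu=z_\mu^2\circ s_\mu$, and finally reads off the forcing
\[
\widetilde p_\mu=\frac{2\,z_\mu z_\mu''}{|z_\mu|^4}+\frac{z_\mu^2\,(1-2|z_\mu'|^2)}{|z_\mu|^6},\qquad p_\mu=\widetilde p_\mu\circ s_\mu,
\]
which is the unique $p$ making $u_\mu$ an exact solution of $\ddot u=-u/|u|^3+p(t)$. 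No implicit function theorem, no shooting, no bifurcation analysis is needed: existence of $(u_n,p_n,T_n)$ is automatic. All the work then goes into the estimate (iii), namely showing $\int_I|p_\mu-p|\,dt\to 0$, which the paper handles via a careful chain of inequalities comparing $z_\mu$, $z_\mu'$, $z_\mu''$ to $z$, $z'$, $z''$ near the collision and a dominated-convergence argument. The missing idea in your proposal is precisely this reversal: in a problem of the form ``find nearby $(p_n,u_n)$'', one is free to choose $u_n$ first.
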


\begin{proof}
We shall assume that $u(t)$ has a single collision on each period. The case of multiple collisions can be treated with similar arguments; note that along a periodic solution the collisions are isolated, thus only finitely many of them lie on a compact interval. From now one we assume that
$$
u(0) = 0 \quad \mbox{ and } \quad u(t) \neq 0 \; \mbox{ if } 0 < \vert t \vert \leq \frac{T}{2}.
$$
We follow the discussion at the end of Section \ref{sec41} with $\ve = 1$ and $P(t,z) = \vert z \vert^2 \langle p(t), z^2 \rangle$. A solution $X(s)$ of \eqref{lc0} associated to $u(t)$ can be constructed. Let us recall that in this constructions there are two alternatives, either $X(s+S) = X(s)$ or $X(s+S) = \mathcal{J} X(s)$ with 
$$
T = \int_0^S \vert z(s) \vert^2 \,ds.
$$
We shall assume that we are in the first case. The second can be treated similarly. For $X(s) = (z(s),w(s),t(s),\tau(s))$ we know that $t = t(s)$ is a homeomorphism of the real line with $t(0) = 0$ and $t(s+S) = t(s) + T$. The inverse homeomorphism is denoted by $s = s(t)$. It satisfies $s(0) = 0$, $s(t+T) = s(t) + S$ and $u(t) = z(s(t))^2$. Let $\Psi: \mathbb{R} \to \mathbb{R}$ be a $\mathcal{C}^\infty$-function with the properties
$$
0 \leq \Psi \leq 1 \; \mbox{ everywhere}, \quad \Psi = 1 \; \mbox{ on } [-1,1], \quad \Psi(s) = 0 \; \mbox{ if } \vert s \vert \geq 2.
$$
Given $\mu \in \left(0,\tfrac{S}{4}\right)$ we define the function $z_{\mu}: \mathbb{R} \to \mathbb{R}^2$ as the $S$-periodic extension of 
$$
z_{\mu}(s) = z(s) + \mu^3 \Psi\left( \frac{s}{\mu}\right) v \quad \mbox{ if } \vert s \vert \leq \frac{S}{2}
$$
where $v$ is an unit vector in $\mathbb{R}^2$ which is orthogonal to $z'(0)$. Note that $\mathcal{K}_1(X(0)) = 0$ implies that $\vert z'(0) \vert^2 = \tfrac12$. 

A first observation is that $z_{\mu}(s)$ does not vanish when $\mu$ is small enough. To prove this we first note that $z(0) = z''(0) = 0$ so that $z(s) = z'(0)s + R(s)$ with $\vert R(s) \vert \leq C_1 \vert s \vert^3$ if $\vert s \vert \leq \tfrac{S}{2}$. Then, using that $v$ and $z'(0)$ are orthogonal,
\begin{equation}\label{low}
\begin{aligned}
\vert z_{\mu}(s) \vert & \geq \left\vert z'(0)s + \mu^3
\Psi\left( \frac{s}{\mu}\right)v  \right \vert - \vert R(s) \vert \\
& \geq \sqrt{\frac12 s^2 + \mu^6 \Psi\left( \frac{s}{\mu}\right)^2} - C_1 \vert s \vert^3.
\end{aligned}
\end{equation}
Since $s = 0$ is the only collision of $z(s)$ on $[-\tfrac{S}{2},\tfrac{S}{2}]$
there exists $C_1^* > 0$ such that $\vert z(s) \vert \geq C_1^* \vert s \vert$ if $\vert s \vert \leq \tfrac{S}{2}$. Now
it is easy to prove that there exists $C_2 > 0$ such that, for small $\mu$,
\begin{equation}\label{ES1}
\vert z_{\mu}(s) \vert \geq C_2 (\vert s \vert + \mu^3) \quad \mbox{ if } \vert s \vert \leq \frac{S}{2}.
\end{equation}
For $\vert s \vert \leq \mu$ we apply \eqref{low} and for $\mu \leq \vert s \vert \leq \tfrac{S}{2}$ we observe that
$$
\vert z_{\mu}(s) \vert \geq \vert z(s) \vert - \mu^3\geq C_1^* \vert s \vert - \mu^3 \geq \frac{C_1^*}{2}\vert s \vert + 
\frac{C_1^*}{2} \mu - \mu^3.
$$

It is clear that $z_{\mu}(s) \to z(s)$ when $\mu \to 0$ and this convergence is uniform in $s$. We define the number
$$
T_{\mu} = \int_0^S \vert z_{\mu}(s) \vert^2 \,ds
$$
and consider the diffeomorphism of the real line
$$
t_{\mu}(s) = \int_0^s \vert z_{\mu}(\sigma) \vert^2 \,d\sigma.
$$
It satisfies
$$
t_{\mu}(s+S) = t_{\mu}(s) + T_{\mu}
$$
and the inverse diffeomorphism $s_{\mu} = t_{\mu}^{-1}$ converges to $s = t^{-1}$ (as $\mu \to 0$) uniformly on compact intervals. This inverse satisfies
$$
s_{\mu}(t + T_{\mu}) = s_{\mu}(t) + S
$$
and the function $u_{\mu}(t) = z_{\mu}(s_{\mu}(t))^2$ is $T_{\mu}$-periodic and has no collisions. Moreover
$$
u_{\mu}(t) \to u(t) \quad \mbox{ as } \mu \to 0
$$
uniformly on compact intervals. Following Section \ref{sec41} we define
$$
\widetilde{p}_{\mu} = 2 \,\frac{z_{\mu} z''_{\mu}}{\vert z_{\mu} \vert^4} + \frac{z_{\mu}^2 ( 1 - 2 \vert z'_{\mu} \vert^2)}{\vert z _{\mu} \vert^6}.
$$
This function is $\mathcal{C}^\infty$ and $S$-periodic and some computations show that $u_{\mu}(t)$ is a $T_{\mu}$-periodic solution of
$$
\ddot u = - \frac{u}{\vert u \vert^3} + p_{\mu}(t)
$$
where $p_{\mu} = \widetilde p_{\mu} \circ s_{\mu}$.

To complete the proof it remains to find a sequence $\mu_n$ tending to zero such that the condition iii) holds
for $p_n = p_{\mu_n}$. After the change of variables $t = t(s)$ the integral in iii) is transformed into
\begin{equation}\label{trans}
\int_J \vert \widetilde{p}_n(s_n(t(s))) - \widetilde p(s)\vert \vert z(s) \vert^2 \,ds
\end{equation}
where $J$ is a bounded interval, $\widetilde p_n = \widetilde p_{\mu_n}$ and $s_n = s_{\mu_n}$.
For simplicity in the notation we assume that $J = [-\tfrac{S}{2},\tfrac{S}{2}]$ but the argument is the same for any other interval. From the definition of $p_{\mu}$ we know that $\widetilde p_n$ converges to $\widetilde p = p \circ t$ uniformly on any compact set $K \subset [-\tfrac{S}{2},\tfrac{S}{2}] \setminus \{0\}$. Given $s$ with $0 < \vert s \vert \leq \tfrac{S}{2}$ we know that $s_n(t(s)) \to s$ and therefore 
$$
\widetilde p_n(s_n(t(s))) \to \widetilde p(s) \quad \mbox{ as } n \to \infty
$$
if $0 < \vert s \vert \leq \tfrac{S}{2}$. To prove that the integral \eqref{trans} tends to zero it is enough to apply dominated convergence if we prove that there exists $C_3 > 0$ such that
\begin{equation}\label{ES2}
\vert \widetilde p_n(s_n(t(s))) \vert \vert z(s) \vert^2 \leq C_3 \quad \mbox{ if } \vert s \vert \leq \frac{S}{2}.
\end{equation}
The rest of the proof will be a sequence of estimates whose aim is to obtain \eqref{ES2}.

The first estimate is almost automatic,
\begin{equation}\label{ES3}
\vert z(s) \vert \leq C_4 \vert s \vert \quad \mbox{ if } \vert s \vert \leq \frac{S}{2}.
\end{equation}

Next we present an \emph{auxiliary result}: given $C_5 > 0$ there exists $\Delta > 0$ such that if $\sigma$ and $s$ are numbers with $\vert s \vert \leq \Delta$, $s \sigma > 0$ and 
$$
\vert \sigma^3 + 6 \mu^6 \sigma - s^3 \vert \leq C_5 s^4
$$
then $\frac{s^2}{\sigma^2 + \mu^6} \leq 16$.
To prove this auxiliary result we assume without loss of generality that $s > 0$ and we distinguish two cases. If $0 < s \leq 2\mu^3$ then $\frac{s^2}{\sigma^2 + \mu^6} \leq \frac{4\mu^6}{\sigma^2 + \mu^6} \leq 4$. If $s > 2\mu^3$ we consider the polynomial $p(x) = x^3 + 6 \mu^6 x - s^3$ and evaluate it at $x = \tfrac{s}{4}$ and $x = \sigma$, 
$$
p\left( \frac{s}{4}\right) = \left( \frac{1}{4^3} - 1\right)s^3 + \frac{3\mu^6}{2} s \leq  \left( \frac{1}{4^3} - 1\right) s^3 + \frac{3}{8}s^3 = -\frac{39}{64}s^3
$$
$$
p(\sigma) \geq - C_5 s^4.
$$
We deduce that $p(\sigma) > p\left( \tfrac{s}{4}\right)$ if $s$ is small. Since $p(x)$ is an increasing function we conclude that $\sigma > \tfrac{s}{4}$ and so $\frac{s^2}{\sigma^2 + \mu^6} \leq 16$. We will apply the above result to estimate the quantity $\sigma_n(s) = s_n(t(s))$. 

Our next estimate is the following:
\begin{equation}\label{ES4}
\vert \sigma_n(s) \vert \leq C_6 \vert s \vert \quad \mbox{ if } \vert s \vert \leq \frac{S}{2}.
\end{equation}
Indeed 
$$
\sigma_n'(s) = s_n'(t(s))t'(s) = \tfrac{1}{\vert z_n(s) \vert^2}{\vert z(s) \vert^2}
$$
and from the mean value theorem
$$
\sigma_n(s) = \sigma_n(s) - \sigma_n(0) = \frac{\vert z(\xi) \vert^2}{\vert z_n(\xi) \vert^2} s
$$
where $\xi$ lies between $s$ and $0$. From \eqref{ES1} and \eqref{ES3}, 
$$
\frac{\vert \sigma_n(s) \vert}{\vert s \vert} \leq \frac{C_4^2 \vert \xi \vert^2}{C_2^2 (\vert \xi \vert + \mu_n^3)^2} \leq \left(\frac{C_4}{C_2} \right)^2 =: C_6.
$$

We claim that
\begin{equation}\label{ES5}
\frac{s^2}{\sigma_n(s)^2 + \mu_n^6} \leq 16 \quad \mbox{ if } \vert s \vert \leq \frac{S}{2}.
\end{equation}
Since $\sigma_n$ converges to the identity uniformly in $\vert s \vert \leq \tfrac{S}{2}$ it is enough to obtain the estimate on a small neighborhood of $s = 0$. From the definition of $\sigma_n$ we have the identity
\begin{equation}\label{idet}
t_n(\sigma_n(s)) = t(s).
\end{equation}
We expand $t_n(s)$ and $t(s)$ in a neighborhood of $s = 0$. From $z(s) = z'(0)s + O(s^3)$ we deduce that $\vert z(s) \vert^2 = \tfrac{1}{2} s^2 + O(s^4)$ and $t(s) = \int_0^s \vert z(\sigma) \vert^2 \,d\sigma = \tfrac16 s^3 + O(s^5)$. Since $\Psi$ is flat at $\xi = 0$,
$\Psi(\xi) = 1 + O(\xi^3)$. Then $z_n(s) = z(s) + \mu_n^3 \Psi\left( \frac{s}{\mu_n}\right) v = z'(0) s + \mu_n^3 v + O(s^3)$ and $\vert z_n(s) \vert^2 = \tfrac12 s^2 + \mu_n^6 + O(s^3)$. The expansion of $t_n$ is
$$
t_n(s) = \int_0^s \vert z_n(\sigma)\vert^2 \,d\sigma = \frac16 s^3 + \mu_n^6 s + O(s^4). 
$$
The above expansion together with \eqref{idet} lead to
$$
\frac16 \sigma_n(s)^3 + \mu_n^6 \sigma_n(s) + O(\sigma_n(s)^4) = \frac16 s^3 + O(s^5).
$$
From \eqref{ES4}, 
$$
\sigma_n(s)^3 + 6 \mu_n^6 \sigma_n(s) - s^3 = O(s^4)
$$
and the auxiliary result implies that \eqref{ES5} holds.

We are now in a position to conclude. First, we prove that
\begin{equation}\label{ES6}
\frac{\vert z_n''(\sigma_n(s)) \vert}{\vert z_n(\sigma_n(s)) \vert^3} \vert z(s) \vert^2 \leq C_9 \quad \mbox{ if } \vert s \vert \leq \frac{S}{2}.
\end{equation}
From $z_n''(s) = z''(s) + \mu_n\Psi''\left( \tfrac{s}{\mu_n}\right)v$ we deduce that
$$
\vert z_n''(s) \vert \leq \vert z''(s) \vert + C_7 \vert s \vert \leq C_8 \vert s \vert.
$$
Here we have used $z''(0) = 0$ and that $\Psi''(\xi) = O(\xi)$. From \eqref{ES3}, \eqref{ES1} and \eqref{ES5}, 
$$
\frac{\vert z_n''(\sigma_n(s)) \vert}{\vert z_n(\sigma_n(s)) \vert^3} \vert z(s) \vert^2 \leq \frac{C_8 C_4^2 \vert \sigma_n(s) \vert s^2}{C_2^3 (\vert \sigma_n(s) \vert + \mu_n^3)^3} \leq C_9.
$$

Finally, we have
$$
\frac{\vert  1 - 2 \vert z_n'(\sigma_n(s)) \vert^2 \vert}{\vert z_n(\sigma_n(s)) \vert^4} \vert z(s) \vert^2 \leq C_{10} \quad \mbox{ if } \vert s \vert \leq \frac{S}{2}.
$$
From $\Psi'(\xi) = O(\xi^2)$, $z_n'(s) = z'(s) + \mu_n^2\Psi'\left( \tfrac{s}{\mu_n}\right)v = z'(0) + O(s^2)$. Then $\vert z_n'(s) \vert^2 = \tfrac12 + O(s^2)$ and $1 - 2 \vert z_n'(\sigma_n(s)) \vert^2 = O (\sigma_n(s)^2) = O(s^2)$. The conclusion follows as for \eqref{ES6}.
\end{proof}

\subsection{Perturbations with singularity: an example}

In the final section of the paper \cite{F1928} Fatou considered the equations of motion of a particle under the force of attraction of a rotating body. The body has certain symmetry properties and, in particular, it is symmetric with respect to the equator $\{ z = 0 \}$. The motion of the particle is constrained to this plane. 

\begin{figure}[!h]
\centering
\includegraphics[scale=0.35]{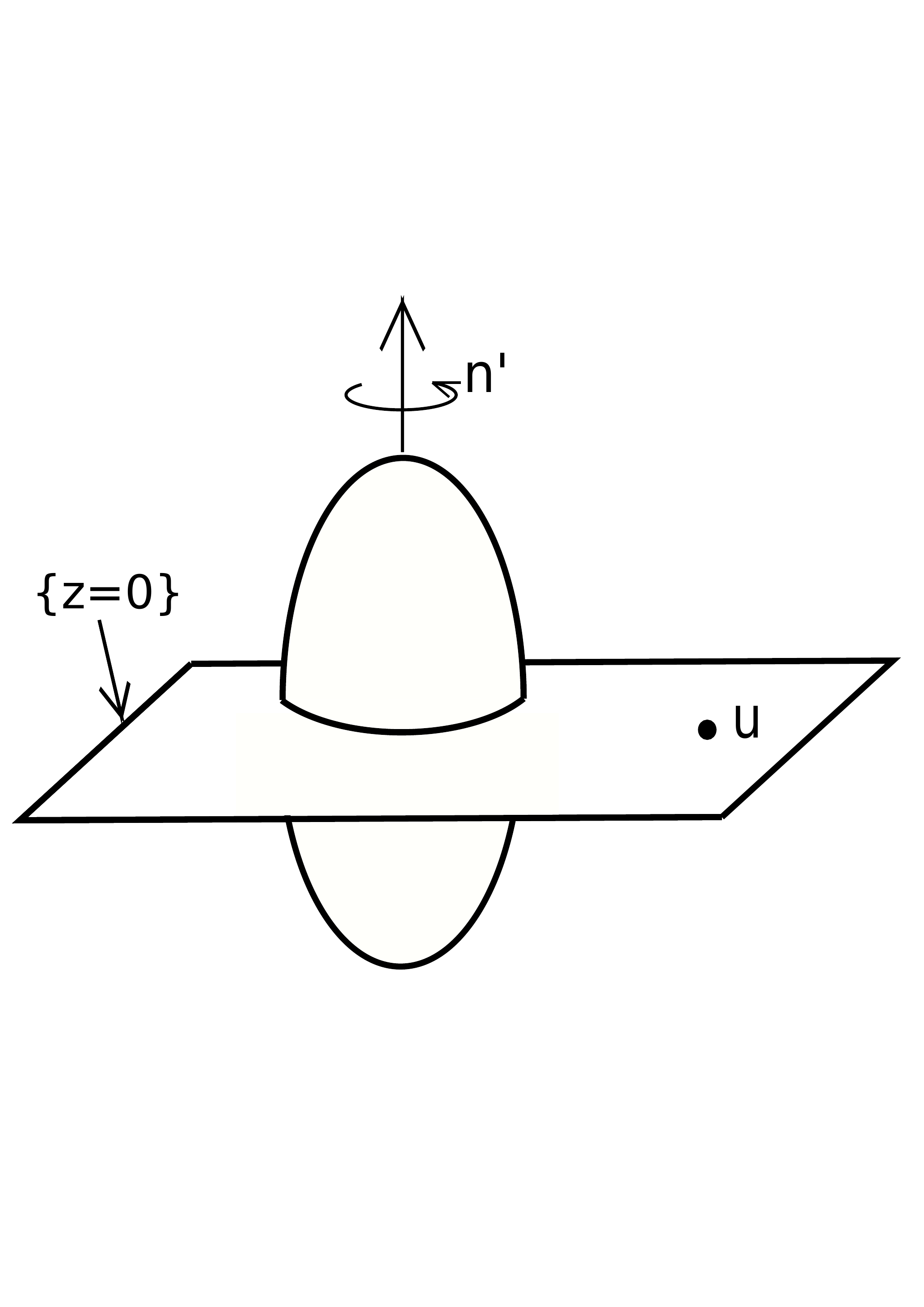}
\label{fig1}
\end{figure}

Standard considerations in Potential Theory allow to approximate the gravitational force acting on the point $u = (r\cos\theta,r\sin\theta,0)$ by the gradient of the function
\begin{equation}\label{pot}
\frac{1}{r} + \frac{1}{r^3} [k + h \cos (2(\theta-\beta))]
\end{equation}
where $\beta = n' t + \gamma$, $k > 0$ depends on the geometry of the body ($k = 0$ for a sphere) and $h> 0$ takes into account the unequal distribution of mass on meridians.

Fatou proved that the circular solutions of the unperturbed problem 
($k = h = 0$) have a certain stability property when the rotation of the body is very fast. Namely, for an angular velocity $n' \to \infty$ there exists a solution of the perturbed equation which remains close to the circular orbit for very large (but finite) intervals of time. When the body is close to a sphere and the distribution of mass is almost uniform, the parameters $k = k' \ve $ and $h = h' \ve$ are small and we find an equation of the type \eqref{eqmain0} with $N = 2$ and 
$$
U(t,u,\ve) = \frac{k'}{\vert u \vert^3} + \frac{h'}{\vert u \vert^5}
\left[ (u_1^2-u_2^2) \cos(2(n't + \gamma)) + 2 u_1 u_2 \sin(2(n't + \gamma))\right].
$$
This suggests the use of the averaging method to study the existence and stability of periodic solutions (see \cite{BO} for more details), however our main result Theorem \ref{main} does not apply. The perturbation $U$ has a singularity of high order and it is not clear how to define generalized solutions. In this context it is perhaps worth to recall that the approximation given by the formula \eqref{pot} is only valid on the free space and so the mechanical significance of the equations is lost at the singularity. 

\bigbreak
\medbreak
\noindent \textbf{Acknowledgements.}
Alberto Boscaggin acknowledges the support of the ERC Advanced Grant 2013 n. 339958
``Complex Patterns for Strongly Interacting Dynamical Systems - COMPAT'' and of the INDAM-GNAMPA Project ``Dinamiche complesse per il problema degli $N$-centri''. Rafael Ortega is partially supported by Spanish MINECO and ERDF project MTM2014-52232-P.  Lei Zhao is partially supported by NSFC No. 11601242, Fundamental Research Funds for the Central Universities of China, DFG FR 2637/2-1 and ZH 605/1-1.

\vspace{1 cm}
\normalsize

Authors' addresses:
\bigbreak
\medbreak
\indent Alberto Boscaggin \\
\indent Dipartimento di Matematica, Universit\`a di Torino, \\
\indent Via Carlo Alberto 10, I-10123 Torino, Italy \\
\indent e-mail: alberto.boscaggin@unito.it 

\bigbreak
\medbreak
\indent Rafael Ortega \\
\indent Departamento de Matem\'atica Aplicada, Universidad de Granada, \\
\indent E-18071 Granada, Spain \\
\indent e-mail: rortega@ugr.es 

\bigbreak
\medbreak
\indent Lei Zhao \\
\indent Institute of Mathematics, University of Augsburg, \\
\indent Universit\"atsstrasse 2, D-86159 Augsburg, Germany \\
\indent Chern Institute of Mathematics, Nankai University, China \\
\indent e-mail: lei.zhao@math.uni-augsburg.de

\end{document}